\title{Barycentrically associative and preassociative functions}
\author{Jean-Luc Marichal}
\address{Mathematics Research Unit, FSTC, University of Luxembourg \\
6, rue Coudenhove-Kalergi, L-1359 Luxembourg, Luxembourg} \email{jean-luc.marichal[at]uni.lu }
\author{Bruno Teheux}
\address{Mathematics Research Unit, FSTC, University of Luxembourg \\
6, rue Coudenhove-Kalergi, L-1359 Luxembourg, Luxembourg} \email{bruno.teheux[at]uni.lu }
\date{March 28, 2015}
\begin{document}

\theoremstyle{plain}
\newtheorem{theorem}{Theorem}[section]
\newtheorem{lemma}[theorem]{Lemma}
\newtheorem{proposition}[theorem]{Proposition}
\newtheorem{corollary}[theorem]{Corollary}
\newtheorem{fact}[theorem]{Fact}
\newtheorem*{main}{Main Theorem}

\theoremstyle{definition}
\newtheorem{definition}[theorem]{Definition}
\newtheorem{example}[theorem]{Example}

\theoremstyle{remark}
\newtheorem*{conjecture}{Conjecture}
\newtheorem*{problem}{Problem}
\newtheorem{remark}{Remark}
\newtheorem{claim}{Claim}

\newcommand{\N}{\mathbb{N}}
\newcommand{\Q}{\mathbb{Q}}
\newcommand{\R}{\mathbb{R}}
\newcommand{\C}{\mathbb{C}}

\newcommand{\ran}{\mathrm{ran}}
\newcommand{\dom}{\mathrm{dom}}
\newcommand{\id}{\mathrm{id}}
\newcommand{\med}{\mathrm{med}}

\newcommand{\bfu}{\mathbf{u}}
\newcommand{\bfv}{\mathbf{v}}
\newcommand{\bfw}{\mathbf{w}}
\newcommand{\bfx}{\mathbf{x}}
\newcommand{\bfy}{\mathbf{y}}
\newcommand{\bfz}{\mathbf{z}}

\newcommand{\Ast}{\boldsymbol{\ast}}

\begin{abstract}
We investigate the barycentric associativity property for functions with indefinite arities and discuss the more general property of barycentric preassociativity, a generalization of barycentric associativity which does not involve any composition of functions. We also provide a generalization of Kolmogoroff-Nagumo's characterization of the quasi-arithmetic mean functions to barycentrically preassociative functions.
\end{abstract}

\keywords{Barycentric associativity, barycentric preassociativity, functional equation, quasi-arithmetic mean function, axiomatization}

\subjclass[2010]{39B72}

\maketitle

\section{Introduction}

Let $X$ and $Y$ be arbitrary nonempty sets. Throughout this paper we regard tuples $\bfx$ in $X^n$ as $n$-strings over $X$. The $0$-string or \emph{empty} string is denoted by $\varepsilon$ so that $X^0=\{\varepsilon\}$. We denote by $X^*$ the set of all strings over $X$, that is, $X^*=\bigcup_{n\geqslant 0}X^n$, and we denote its elements by bold roman letters $\bfx$, $\bfy$, $\bfz$, $\ldots$ For $1$-strings, we often use non-bold italic letters $x$, $y$, $z$, $\ldots$

We endow the set $X^*$ with concatenation for which we use the juxtaposition notation. For instance, if $\bfx\in X^m$ and $y\in X$, then $\bfx y\in X^{m+1}$. Moreover, for every string $\bfx$ and every integer $n\geqslant 0$, the power $\bfx^n$ stands for the string obtained by concatenating $n$ copies of $\bfx$. In particular we have $\bfx^0=\varepsilon$. The \emph{length} of a string $\bfx$ is denoted by $|\bfx|$. For instance, $|\varepsilon|=0$.

As usual, a function $F\colon X^n\to Y$ (an \emph{operation}, if $Y=X$) is said to be \emph{$n$-ary}. Similarly, we say that a function $F\colon X^*\to Y$ has an \emph{indefinite arity} or is \emph{$\Ast$-ary}. For every integer $n\geqslant 0$, the \emph{$n$-ary part} $F_n$ of a function $F\colon X^*\to Y$ is the restriction of $F$ to $X^n$, that is, $F_n=F|_{X^n}$. The \emph{default value} of $F$ is the value given by its nullary part $F_0(\varepsilon)$. Finally, a \emph{$\Ast$-ary operation} on $X$ (or an \emph{operation} for short) is a function $F\colon X^*\to X\cup\{\varepsilon\}$, and such an operation is said to be \emph{$\varepsilon$-standard} \cite{MarTeh2} if it satisfies the condition
$$
F(\bfx)=\varepsilon \quad\Leftrightarrow\quad \bfx=\varepsilon.
$$

Recall that a $\Ast$-ary operation $F\colon X^*\to X\cup\{\varepsilon\}$ is said to be \emph{associative} (see, e.g., \cite{MarTeh,MarTehb,LehMarTeh}) if it satisfies the equation
$$
F(\bfx\bfy\bfz) ~=~ F(\bfx F(\bfy)\bfz),\qquad \bfx\bfy\bfz\in X^*.
$$

Thus defined, associativity expresses that the function value of a string does not change when replacing any of its substring with its corresponding value. For instance, the sum function over the set of real numbers, regarded as the $\varepsilon$-standard operation $F\colon\R^*\to\R\cup\{\varepsilon\}$ defined as $F_n(\bfx)=\sum_{i=1}^nx_i$ for every integer $n\geqslant 1$, is associative.

In this paper we are first interested in the following variant of associativity, called barycentric associativity.

\begin{definition}\label{de:Bassoc}
A $\Ast$-ary operation $F\colon X^*\to X\cup\{\varepsilon\}$ is said to be \emph{barycentrically associative} (or \emph{B-associative} for short) if it satisfies the identity $F(\bfx\bfy\bfz) = F(\bfx F(\bfy)^{|\bfy|}\bfz)$ for every $\bfx\bfy\bfz\in X^*$.
\end{definition}

\begin{remark}\label{rem:Fe1}
We observe from Definition~\ref{de:Bassoc} that, if $F(\bfx)\in X$ for every $\bfx\in X^*\setminus\{\varepsilon\}$ (which holds, e.g., if $F$ is $\varepsilon$-standard), then the default value $F_0(\varepsilon)$ of a B-associative operation $F\colon X^*\to X\cup\{\varepsilon\}$ is unimportant in the sense that if we modify this value, then the resulting operation is still B-associative. However, if $F(\mathbf{y})=\varepsilon$ for some $\mathbf{y}\neq\varepsilon$, then the default value $F_0(\varepsilon)$ must be $\varepsilon$. Indeed, we then have
$$
\varepsilon ~=~ F(\mathbf{y}) ~=~ F(F(\mathbf{y})^{|\mathbf{y}|}) ~=~ F(\varepsilon^{|\mathbf{y}|}) ~=~ F(\varepsilon).
$$
To give a nonconstant example of such an operation, set $a\in X$ and consider the operation $F_a\colon X^*\to X\cup\{\varepsilon\}$ defined as $F_a(\bfx)=a$, if $\bfx=\mathbf{u}a\mathbf{v}$ for some $\mathbf{u}\mathbf{v}\in X^*$, and $F_a(\bfx)=\varepsilon$, otherwise. Then $F_a$ is both associative and B-associative.
\end{remark}

By definition, B-associativity expresses that the function value of a string does not change when replacing every letter of a substring with the value of this substring. For instance, the arithmetic mean over the set of real numbers, regarded as the $\varepsilon$-standard operation $F\colon\R^*\to\R\cup\{\varepsilon\}$ defined as $F_n(\bfx)=\frac{1}{n}\sum_{i=1}^nx_i$ for every integer $n\geqslant 1$, is B-associative. However, this operation is not associative. Actually, contrary to associativity, B-associativity is satisfied by various mean functions when regarded as $\varepsilon$-standard operations over the reals, including the arithmetic mean, the geometric mean, and the harmonic mean.

To our knowledge, a simple form of B-associativity was introduced first in 1909 by Schimmack \cite{Sch09} as a natural and suitable variant of associativity to characterize the arithmetic mean over the reals. More precisely, Schimmack considered the condition $F(\bfy z)=F(F(\bfy)^{|\bfy|}z)$ for symmetric functions $F\colon\bigcup_{n\geqslant 1}\R^n\to\R$ (`symmetric' means that every $n$-ary part of $F$ is invariant under any permutation of its arguments).

A similar condition, namely $F(\bfy\bfz)=F(F(\bfy)^{|\bfy|}\bfz)$ with $|\bfz|\geqslant 1$, was then used for symmetric functions $F\colon\bigcup_{n\geqslant 1}\R^n\to\R$ in 1930 by Kolmogoroff \cite{Kol30} and independently by Nagumo \cite{Nag30} to characterize the class of quasi-arithmetic mean functions (see Theorem~\ref{thm:KolNag30} below).

The general nonsymmetric definition given in Definition~\ref{de:Bassoc} appeared more recently in \cite{Ant98} and \cite{Mar98} (see also \cite{MarMatTou99}) and both the symmetric and nonsymmetric versions of this definition have then been used to characterize further classes of functions; see, e.g.,  \cite{FodMar97,FodMar06,Mar00,MarMatTom,MarMatTou99}. For general background on B-associativity and its links with associativity, see \cite[Sect.~2.3]{GraMarMesPap09}.

Since their introduction, this condition and its different versions were used under at least three different names: \emph{associativity of means} \cite{deF31}, \emph{decomposability} \cite[Sect.~5.3]{FodRou94}, and \emph{barycentric associativity} \cite{Ant98}. Here we have chosen the third one, which naturally recalls the associativity property of the barycenter (see Remark~\ref{rem:barycenter} below).

In \cite{MarTeh,MarTehb} the authors recently introduced a generalization of associativity for $\Ast$-ary functions called preassociativity (see also \cite{LehMarTeh,MarTeh2}). A function $F\colon X^*\to Y$ is said to be \emph{preassociative} if
$$
F(\bfy)=F(\bfy')\quad\Rightarrow\quad F(\bfx\bfy\bfz)=F(\bfx\bfy'\bfz),\qquad \bfx\bfy\bfy'\bfz\in X^*.
$$

In this paper we investigate the following simultaneous generalization of preassociativity and barycentric associativity, which we call barycentric preassociativity.

\begin{definition}\label{de:BPA}
We say that a function $F\colon X^*\to Y$ is \emph{barycentrically preassociative} (or \emph{B-preassociative} for short) if for every $\bfx\bfy\bfy'\bfz\in X^*$ such that $|\bfy|=|\bfy'|$ we have
$$
F(\bfy)=F(\bfy')\quad\Rightarrow\quad F(\bfx\bfy\bfz)=F(\bfx\bfy'\bfz).
$$
\end{definition}

\begin{remark}
We observe that if we modify the default value of a B-preassociative function, then the resulting $\Ast$-ary function is still B-preassociative.
\end{remark}

Thus, a function $F\colon X^*\to Y$ is B-preassociative if the equality of the function values of two strings of the same length still holds when adding identical arguments on the left or on the right of these strings. For instance, the $\varepsilon$-standard sum operation $F\colon\R^*\to\R\cup\{\varepsilon\}$ defined as $F_n(\bfx)=\sum_{i=1}^nx_i$ for every integer $n\geqslant 1$ is B-preassociative. However, this operation is not B-associative.

By definition, B-preassociativity generalizes preassociativity. It was shown in \cite{LehMarTeh,MarTeh,MarTehb} that preassociativity generalizes associativity. Similarly, we show in this paper (Proposition~\ref{prop:BA-PBA1}) that B-preassociativity generalizes B-associativity.

B-preassociativity may be very natural in various areas. In decision making for instance, in a sense it says that if we express an indifference when comparing two profiles, then this indifference is preserved when adding identical pieces of information to these profiles. In descriptive statistics and aggregation function theory, it says that the aggregated value of a series of numerical values remains unchanged when modifying a bundle of these values without changing their partial aggregation.

B-preassociativity is not really a new property. A slightly different version was actually introduced in 1931 by de Finetti \cite[p.~380]{deF31} for symmetric functions $F\colon\bigcup_{n\geqslant 1}\R^n\to\R$. According to de Finetti, a mean function $F\colon\bigcup_{n\geqslant 1}\R^n\to\R$ is said to be `associative' if for every $x\bfy\bfz\in\bigcup_{n\geqslant 1}\R^n$, with $|\bfz|\geqslant 1$, we have $F(\bfy\bfz)=F(x^{|\bfy|}\bfz)$ whenever $F(\bfy)=F(x^{|\bfy|})$.

It is noteworthy that, contrary to B-associativity, B-preassociativity does not involve any composition of functions and hence allows us to consider a codomain $Y$ that may differ from the set $X\cup\{\varepsilon\}$. For instance, the length function $F\colon X^*\to\R$, defined as $F(\bfx)=|\bfx|$, is B-preassociative.

The outline of this paper is as follows. After going through some preliminaries in Section 2, we establish a number of important properties of B-associative and B-preassociative functions in Sections 3 and 4, respectively. In Section 4 we mainly focus on those B-preassociative functions $F\colon X^*\to Y$ for which, for every integer $n\geqslant 1$, the $n$-ary function $F_n\colon X^n\to Y$ has the same range as its diagonal section $x\mapsto F_n(x^n)$. (When $Y=X\cup\{\varepsilon\}$, these B-preassociative functions include the B-associative ones). In particular, we give a characterization of these functions as compositions of the form $F_n=f_n\circ H_n$, where $H\colon X^*\to X\cup\{\varepsilon\}$ is a B-associative $\varepsilon$-standard operation and $f_n\colon H_n(X^n)\to Y$ is one-to-one (Theorem~\ref{thm:FactoriAWRI-BPA237111}). From this result we derive a generalization of Kolmogoroff-Nagumo's characterization of the quasi-arithmetic mean functions to barycentrically preassociative functions (Theorem~\ref{thm:KolmExt}).

The terminology used throughout this paper is the following. We denote by $\N$ the set $\{1,2,3,\ldots\}$ of strictly positive integers. The domain and range of any function $f$ are denoted by $\dom(f)$ and $\ran(f)$, respectively. The identity operation on any nonempty set is denoted by $\id$. For every integer $n\geqslant 1$, the diagonal section $\delta_F\colon X\to Y$ of a function $F\colon X^n\to Y$ is defined as $\delta_F(x)=F(x^n)$. For every function $F\colon X^n\to Y$, with $n\geqslant 2$, we also let $\delta_{F}^{r}\colon X^2\to Y$ and $\delta_{F}^{\ell}\colon X^2\to Y$ be the binary functions defined as
$$
\delta_{F}^{r}(xy) ~=~ F(x^{n-1}y)\quad\mbox{and}\quad\delta_{F}^{\ell}(xy) ~=~ F(xy^{n-1}),
$$
respectively. By extension, we define $\delta_{F_0}=\delta_{F_0}^{r}=\delta_{F_0}^{\ell}=F_0$ and $\delta_{F_1}^{r}=\delta_{F_1}^{\ell}=F_1$.

\begin{remark}[Geometric interpretation of B-associativity]\label{rem:barycenter}
Consider a set of identical homogeneous balls in $X=\R^n$. Each ball is identified by the coordinates $x\in X$ of its center. Let $F\colon X^*\to X\cup\{\varepsilon\}$ be the $\varepsilon$-standard operation which carries any set of balls into their barycenter. Due to the well-known associativity-like property of the barycenter, the operation $F$ must satisfy the equation $F(\bfx\bfy\bfz)=F(\bfx F(\bfy)^{|\bfy|}\bfz)$ for every $\bfx\bfy\bfz\in X^*$ and therefore is B-associative (see Figure~\ref{fig:BA}).
\setlength{\unitlength}{6ex}
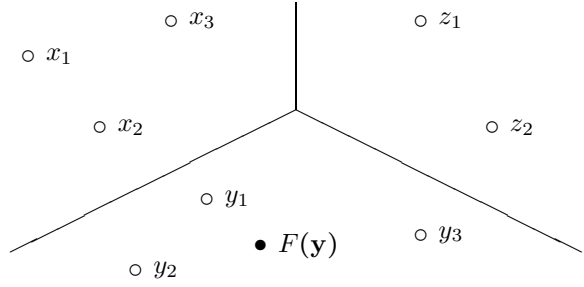
\begin{figure}[tbp]\centering
\begin{picture}(8,4)
\put(4,2.5){\line(-2,-1){4}}\put(4,2.5){\line(2,-1){4}}\put(4,2.5){\line(0,1){1.5}}
\put(1.75,0.25){\circle{0.15}}\put(2.00,0.25){\makebox(0,0)[l]{$y_2$}}
\put(2.75,1.25){\circle{0.15}}\put(3.00,1.25){\makebox(0,0)[l]{$y_1$}}
\put(5.75,0.75){\circle{0.15}}\put(6.00,0.75){\makebox(0,0)[l]{$y_3$}}
\put(3.50,0.60){\circle*{0.15}}\put(3.75,0.60){\makebox(0,0)[l]{$F(\mathbf{y})$}}
\put(0.25,3.25){\circle{0.15}}\put(0.50,3.25){\makebox(0,0)[l]{$x_1$}}
\put(1.25,2.25){\circle{0.15}}\put(1.50,2.25){\makebox(0,0)[l]{$x_2$}}
\put(2.25,3.75){\circle{0.15}}\put(2.50,3.75){\makebox(0,0)[l]{$x_3$}}
\put(5.75,3.75){\circle{0.15}}\put(6.00,3.75){\makebox(0,0)[l]{$z_1$}}
\put(6.75,2.25){\circle{0.15}}\put(7.00,2.25){\makebox(0,0)[l]{$z_2$}}
\end{picture}
\caption{Barycentric associativity} \label{fig:BA}
\end{figure}
\end{remark}

\section{Preliminaries}

Recall that, for any $n\in\N$, an $n$-ary operation $F\colon X^n\to X$ is said to be \emph{idempotent} (see, e.g., \cite{GraMarMesPap09}) if $\delta_F=\id$. An operation $F\colon X^n\to X$ is said to be \emph{range-idempotent} \cite{GraMarMesPap09} if $\delta_F|_{\ran(F)}=\id|_{\ran(F)}$, or equivalently, $\delta_F\circ F=F$. In this case $\delta_F$ necessarily satisfies the equation $\delta_F\circ\delta_F=\delta_F$.

We now introduce the following definitions. We say that a $\Ast$-ary operation $F\colon X^*\to X\cup\{\varepsilon\}$ is
\begin{itemize}
\item \emph{idempotent} if $\delta_{F_n}=\id$ for every $n\in\N$;

\item \emph{arity-wise range-idempotent} if $F(F(\bfx)^{|\bfx|})=F(\bfx)$ for every $\bfx\in X^*$ (if $F$ is $\varepsilon$-standard, this condition is equivalent to $\delta_{F_n}\circ F_n=F_n$ for every $n\in\N$).
\end{itemize}
We say that an $n$-ary function $F\colon X^n\to Y$ ($n\in\N$) is \emph{quasi-range-idempotent} if $\ran(\delta_F)=\ran(F)$ and we say that a $\Ast$-ary function $F\colon X^*\to Y$ is \emph{arity-wise quasi-range-idempotent} if $F_n$ is quasi-range-idempotent for every $n\in\N$.

We immediately observe that range-idempotent operations $F\colon X^n\to X$ are necessarily quasi-range-idempotent. The following proposition states a finer result.

\begin{proposition}
For any $n\in\N$, an operation $F\colon X^n\to X$ is range-idempotent if and only if it is quasi-range-idempotent and satisfies $\delta_F\circ \delta_F=\delta_F$.
\end{proposition}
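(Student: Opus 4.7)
The plan is to unpack both conditions in terms of the equivalent formulation $\delta_F\circ F=F$ of range-idempotency, and establish each direction by a short chain of equalities.

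For the forward implication, assume $\delta_F\circ F=F$. To get quasi-range-idempotency, the inclusion $\ran(\delta_F)\subseteq\ran(F)$ is immediate from $\delta_F(x)=F(x^n)$; for the reverse inclusion, take $y\in\ran(F)$, write $y=F(\bfx)$, and compute $y=F(\bfx)=\delta_F(F(\bfx))=\delta_F(y)$, so $y\in\ran(\delta_F)$. To obtain $\delta_F\circ\delta_F=\delta_F$, apply the hypothesis to the particular string $\bfx=x^n$: this gives $\delta_F(\delta_F(x))=\delta_F(F(x^n))=F(x^n)=\delta_F(x)$ for every $x\in X$.

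For the converse, suppose $\ran(\delta_F)=\ran(F)$ and $\delta_F\circ\delta_F=\delta_F$. Fix $\bfx\in X^n$ and set $y=F(\bfx)\in\ran(F)=\ran(\delta_F)$. Then $y=\delta_F(x_0)$ for some $x_0\in X$, so using $\delta_F\circ\delta_F=\delta_F$ we get
$$
\delta_F(F(\bfx)) ~=~ \delta_F(y) ~=~ \delta_F(\delta_F(x_0)) ~=~ \delta_F(x_0) ~=~ y ~=~ F(\bfx),
$$
which yields $\delta_F\circ F=F$, that is, range-idempotency.

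No step presents a genuine obstacle; the argument is essentially a formal unwinding of the two formulations of range-idempotency. The only point deserving attention is the forward direction's verification of $\ran(F)\subseteq\ran(\delta_F)$, which is the place where $\delta_F\circ F=F$ is used to promote an element of $\ran(F)$ to an element of $\ran(\delta_F)$; it is also the key step that dually enables the converse, where quasi-range-idempotency supplies the preimage under $\delta_F$ needed to invoke $\delta_F\circ\delta_F=\delta_F$.
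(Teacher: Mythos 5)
Your proof is correct and follows essentially the same route as the paper's, merely spelling out the steps the paper leaves implicit (the paper says the converse inclusion ``follows immediately'' from $\delta_F\circ F=F$, and that under quasi-range-idempotence the identity $\delta_F\circ\delta_F=\delta_F$ is equivalent to $\delta_F\circ F=F$ --- exactly the two computations you carry out). No issues.
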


\begin{proof}
(Necessity) We have $\ran(\delta_F)\subseteq\ran(F)$ for any operation $F\colon X^n\to X$. Since $F$ is range-idempotent, we have $\delta_F\circ F=F$, from which the converse inclusion follows immediately. In particular, $\delta_F\circ \delta_F=\delta_F$.

(Sufficiency) Since $F$ is quasi-range-idempotent, the identity $\delta_F\circ \delta_F=\delta_F$ is equivalent to $\delta_F\circ F=F$.
\end{proof}

We now show that any quasi-range-idempotent function $F\colon X^n\to Y$ ($n\in\N$) can always be factorized as $F=\delta_F\circ H$, where $H\colon X^n\to X$ is range-idempotent.

First recall that a function $g$ is a \emph{quasi-inverse} \cite[Sect.~2.1]{SchSkl83} of a function $f$ if $f\circ g|_{\ran(f)}=\id|_{\ran(f)}$ and $\ran(g|_{\ran(f)})=\ran(g)$. We then have $\ran(g)\subseteq\dom(f)$ and the function $f|_{\ran(g)}$ is one-to-one. Recall also that the Axiom of Choice (AC) is equivalent to the statement ``every function has a quasi-inverse.'' Moreover, the relation of being quasi-inverse is symmetric, i.e., if $g$ is a quasi-inverse of $f$, then $f$ is a quasi-inverse of $g$. Throughout this paper we denote the set of all quasi-inverses of $f$ by $Q(f)$.

\begin{fact}\label{fact:fgh}
Assume AC and let $f$ and $h$ be two functions such that $\ran(h)\subseteq\ran(f)$. Then we have $f\circ g\circ h=h$ for every $g\in Q(f)$.
\end{fact}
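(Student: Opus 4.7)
The plan is to unfold the definition of quasi-inverse pointwise on the range of $h$. First I would note that since $g \in Q(f)$, the defining identity of a quasi-inverse gives $f \circ g|_{\ran(f)} = \id|_{\ran(f)}$; in particular $\ran(f) \subseteq \dom(g)$, so the composition $g \circ h$ is well defined on all of $\dom(h)$ as soon as $\ran(h) \subseteq \ran(f)$.

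Next I would fix an arbitrary $x \in \dom(h)$ and observe that $h(x) \in \ran(h) \subseteq \ran(f)$ by the hypothesis. Applying the quasi-inverse identity at the point $h(x)$ yields $f(g(h(x))) = h(x)$. Since $x$ was arbitrary, this gives $f \circ g \circ h = h$, as required.

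There is essentially no obstacle here; the statement is a direct consequence of the definition of quasi-inverse together with the containment $\ran(h) \subseteq \ran(f)$. The Axiom of Choice is invoked only to ensure that $Q(f)$ is nonempty (so that a quasi-inverse $g$ exists to be talked about); it plays no role in the verification of the identity itself, which is a single line of unfolding.
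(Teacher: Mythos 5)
Your proof is correct: the paper states this as a \emph{Fact} without proof precisely because it is the immediate pointwise unfolding of the quasi-inverse identity $f\circ g|_{\ran(f)}=\id|_{\ran(f)}$ that you carry out. Your side remarks (that $\ran(f)\subseteq\dom(g)$ makes the composition well defined, and that AC only guarantees $Q(f)\neq\emptyset$ rather than entering the verification) are also accurate.
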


\begin{proposition}\label{prop:QRIqi2431}
Assume AC and let $F\colon X^n\to Y$ be a quasi-range-idempotent function, where $n\in\N$. For any $g\in Q(\delta_F)$, the operation $H\colon X^n\to X$ defined as $H=g\circ F$ is a range-idempotent solution of the equation $F=\delta_F\circ H$. Moreover, the function $\delta_F|_{\ran(H)}$ is one-to-one.
\end{proposition}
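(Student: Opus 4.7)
The plan is to verify the three assertions in turn, each reducing to a short manipulation that combines Fact~\ref{fact:fgh} with the defining properties of a quasi-inverse.

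First, I would establish the factorization $F=\delta_F\circ H$. Substituting the definition $H=g\circ F$, the identity to be proved reads $F=\delta_F\circ g\circ F$. Quasi-range-idempotence of $F$ gives $\ran(F)=\ran(\delta_F)$, and Fact~\ref{fact:fgh} applied with $f=\delta_F$ and $h=F$ then yields precisely $\delta_F\circ g\circ F=F$.

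Next, I would derive range-idempotence of $H$, i.e., $\delta_H\circ H=H$. A direct computation of the diagonal section gives
\[
\delta_H(x) ~=~ H(x^n) ~=~ g(F(x^n)) ~=~ g(\delta_F(x)),
\]
so $\delta_H=g\circ\delta_F$. Consequently,
\[
\delta_H\circ H ~=~ g\circ\delta_F\circ g\circ F ~=~ g\circ F ~=~ H,
\]
where the middle equality uses the identity $\delta_F\circ g\circ F=F$ obtained in the previous step.

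Finally, I would show that $\delta_F|_{\ran(H)}$ is one-to-one by first identifying $\ran(H)$. From $H=g\circ F$ and $\ran(F)=\ran(\delta_F)$ we obtain $\ran(H)=g(\ran(\delta_F))=\ran(g|_{\ran(\delta_F)})$, and the quasi-inverse condition $\ran(g|_{\ran(\delta_F)})=\ran(g)$ then gives $\ran(H)=\ran(g)$. By the general property of quasi-inverses recalled just before Fact~\ref{fact:fgh}, $\delta_F|_{\ran(g)}$ is one-to-one, and hence so is $\delta_F|_{\ran(H)}$.

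There is no conceptual obstacle here; the proof is essentially bookkeeping, and the only care required is to track which equality uses the quasi-range-idempotence of $F$ and which uses the defining properties of the quasi-inverse $g$.
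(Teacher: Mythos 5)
Your proof is correct and follows essentially the same route as the paper's: Fact~\ref{fact:fgh} for the factorization $F=\delta_F\circ H$, the identity $\delta_H\circ H=g\circ\delta_F\circ H=g\circ F=H$ for range-idempotence, and the one-to-oneness of $\delta_F|_{\ran(g)}$ for the last claim. The only (harmless) difference is that you establish the equality $\ran(H)=\ran(g)$, where the paper only needs the inclusion $\ran(H)\subseteq\ran(g)$.
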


\begin{proof}
Let $g\in Q(\delta_F)$ and set $H=g\circ F$. Since $\ran(\delta_F)=\ran(F)$, by Fact~\ref{fact:fgh} we have $\delta_F\circ H=\delta_F\circ g\circ F=F$. Also, $H$ is range-idempotent since $\delta_H\circ H=g\circ \delta_F\circ H=g\circ F=H$. Since $\delta_F|_{\ran(g)}$ is one-to-one and $\ran(H)\subseteq\ran(g)$, the function $\delta_F|_{\ran(H)}$ is also one-to-one.
\end{proof}

The following proposition, inspired from the investigation of Chisini means \cite{Mar10}, yields necessary and sufficient conditions for a function $F\colon X^n\to Y$ to be quasi-range-idempotent.

\begin{proposition}\label{prop:ACqriTFAE3451}
Assume AC and let $F\colon X^n\to Y$ be a function, where $n\in\N$. The following assertions are equivalent.
\begin{enumerate}
\item[(i)] $F$ is quasi-range-idempotent.

\item[(ii)] There exists an operation $H\colon X^n\to X$ such that $F=\delta_F\circ H$.

\item[(iii)] There exists an idempotent operation $H\colon X^n\to X$ and a function $f\colon X\to Y$ such that $F=f\circ H$. In this case, $f=\delta_F$.

\item[(iv)] There exists a range-idempotent operation $H\colon X^n\to X$ and a function $f\colon X\to Y$ such that $F=f\circ H$. In this case, $F=\delta_F\circ H$. Moreover, if $h=\delta_F|_{\ran(H)}$ is one-to-one, then $h^{-1}\in Q(\delta_F)$.

\item[(v)] There exists a quasi-range-idempotent operation $H\colon X^n\to X$ and a function $f\colon X\to Y$ such that $F=f\circ H$.
\end{enumerate}
In assertions (ii), (iv), and (v) we may choose $H=g\circ F$ for any $g\in Q(\delta_F)$ and $H$ is then range-idempotent. In assertion (iii) we may choose $H$ such that $\delta_H=\id$ and $H=g\circ F$ on $X^n\setminus\{x^n:x\in X\}$ for any $g\in Q(\delta_F)$.
\end{proposition}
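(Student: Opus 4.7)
The plan is to establish the cycle $(\mathrm{i})\Rightarrow(\mathrm{iii})\Rightarrow(\mathrm{iv})\Rightarrow(\mathrm{v})\Rightarrow(\mathrm{i})$ and the additional equivalence $(\mathrm{i})\Leftrightarrow(\mathrm{ii})$, invoking Proposition~\ref{prop:QRIqi2431} wherever possible to handle quasi-inverses and range-idempotency cheaply. The equivalence $(\mathrm{i})\Leftrightarrow(\mathrm{ii})$ is essentially free: Proposition~\ref{prop:QRIqi2431} gives $(\mathrm{i})\Rightarrow(\mathrm{ii})$ with the explicit witness $H=g\circ F$, while the reverse direction follows from the inclusions $\ran(\delta_F)\subseteq\ran(F)=\ran(\delta_F\circ H)\subseteq\ran(\delta_F)$.

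For $(\mathrm{i})\Rightarrow(\mathrm{iii})$, I would start from the range-idempotent $H_0=g\circ F$ provided by Proposition~\ref{prop:QRIqi2431} and then upgrade it to a genuinely idempotent operation by redefining it on the diagonal: set $H(x^n)=x$ for $x\in X$ and $H(\bfx)=H_0(\bfx)$ otherwise. Idempotency $\delta_H=\id$ is built in, and the relation $\delta_F\circ H=F$ is preserved because $\delta_F(x)=F(x^n)$ on the diagonal and $\delta_F\circ g|_{\ran(\delta_F)}=\id|_{\ran(\delta_F)}$ together with $\ran(F)=\ran(\delta_F)$ handle the off-diagonal points via Fact~\ref{fact:fgh}. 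Taking $f=\delta_F$ then gives $(\mathrm{iii})$, and applying $\delta$ to $F=f\circ H$ yields $\delta_F=f\circ\delta_H=f$, confirming the identification $f=\delta_F$.

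The implication $(\mathrm{iii})\Rightarrow(\mathrm{iv})$ is immediate since idempotent operations are range-idempotent, and the identity $F=\delta_F\circ H$ follows from $\delta_F=f\circ\delta_H$ combined with $\delta_H|_{\ran(H)}=\id|_{\ran(H)}$. For the quasi-inverse claim in $(\mathrm{iv})$, observe that the already-noted equality $\ran(F)=\ran(\delta_F)$ forces $\ran(h)=\delta_F(\ran(H))=\ran(F)=\ran(\delta_F)$; then injectivity of $h$ makes $h^{-1}$ a genuine inverse on $\ran(\delta_F)$, so $\delta_F\circ h^{-1}|_{\ran(\delta_F)}=\id|_{\ran(\delta_F)}$ and $\ran(h^{-1}|_{\ran(\delta_F)})=\dom(h)=\ran(H)=\ran(h^{-1})$, which is exactly the definition of $h^{-1}\in Q(\delta_F)$. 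The step $(\mathrm{iv})\Rightarrow(\mathrm{v})$ is trivial (range-idempotent implies quasi-range-idempotent). Finally, for $(\mathrm{v})\Rightarrow(\mathrm{i})$, I would compute $\delta_F=f\circ\delta_H$, whence $\ran(\delta_F)=f(\ran(\delta_H))=f(\ran(H))=\ran(F)$, using quasi-range-idempotency of $H$ in the middle equality.

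I do not expect any serious obstacle; the only points requiring care are the diagonal modification in $(\mathrm{i})\Rightarrow(\mathrm{iii})$ (to move from range-idempotent to idempotent without destroying the factorization $F=\delta_F\circ H$) and the verification of the two clauses in the definition of quasi-inverse for $h^{-1}$ in $(\mathrm{iv})$. The concluding sentences about the explicit choices of $H$ come out of the proof for free: $H=g\circ F$ is what Proposition~\ref{prop:QRIqi2431} produces in $(\mathrm{ii})$, $(\mathrm{iv})$, and $(\mathrm{v})$, and the modified version described above provides the idempotent witness in $(\mathrm{iii})$.
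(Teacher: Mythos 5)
Your proof is correct and follows essentially the same route as the paper: the paper proves the single cycle (i)$\Rightarrow$(ii)$\Rightarrow$(iii)$\Rightarrow$(iv)$\Rightarrow$(v)$\Rightarrow$(i), and your direct step (i)$\Rightarrow$(iii) is just the composite of the paper's first two implications (take $H=g\circ F$ from Proposition~\ref{prop:QRIqi2431}, then redefine it on the diagonal to force $\delta_H=\id$), while your remaining implications and the verification that $h^{-1}\in Q(\delta_F)$ match the paper's computations. Your explicit check of the second clause in the definition of quasi-inverse (via $\ran(h)=\ran(\delta_F)$) is a small but welcome addition that the paper leaves implicit.
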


\begin{proof}
(i) $\Rightarrow$ (ii) Follows from Proposition~\ref{prop:QRIqi2431}.

(ii) $\Rightarrow$ (iii) Modifying $\delta_H$ into $\id$ and taking $f=\delta_F$, we obtain $F=f\circ H$, where $H$ is idempotent. We then have $\delta_F=f\circ \delta_H=f\circ\id =f$.

(iii) $\Rightarrow$ (iv) The first part is trivial. Also, we have $\delta_F\circ H=f\circ\delta_H\circ H=f\circ H=F$. Now, if $h=\delta_F|_{\ran(H)}$ is one-to-one, then we have $H=h^{-1}\circ F$ and hence $\delta_F\circ h^{-1}\circ \delta_F=\delta_F\circ \delta_H=h\circ \delta_H\circ \delta_H=h\circ \delta_H=\delta_F$, which shows that $h^{-1}\in Q(\delta_F)$.

(iv) $\Rightarrow$ (v) Trivial.

(v) $\Rightarrow$ (i) We have $\ran(\delta_F)=\ran(f\circ \delta_H)=\ran(f\circ H)=\ran(F)$.

The last part follows from Proposition~\ref{prop:QRIqi2431}.
\end{proof}

\begin{remark}
The proof of implication (v) $\Rightarrow$ (i) in Proposition~\ref{prop:ACqriTFAE3451} shows that the property of quasi-range-idempotence is preserved under left composition with unary maps.
\end{remark}

For any $n\in\N$, we say that a function $F\colon X^n\to Y$ is \emph{idempotizable} (see \cite{Mar10} for a variant of this definition) if it is quasi-range-idempotent and $\delta_F$ is one-to-one. In this case the composition $\delta_F^{-1}\circ F$, from $X^n$ to $X$, is idempotent. From Proposition~\ref{prop:ACqriTFAE3451}, we immediately derive the following corollary.

\begin{corollary}
Let $F\colon X^n\to Y$ be a function, where $n\in\N$. The following assertions are equivalent.
\begin{enumerate}
\item[(i)] $F$ is idempotizable.

\item[(ii)] $\delta_F$ is a bijection from $X$ onto $\ran(F)$ and there is a unique idempotent operation $H\colon X^n\to X$, namely $H=\delta_F^{-1}\circ F$, such that $F=\delta_F\circ H$.

\item[(iii)] There exists an idempotent operation $H\colon X^n\to X$ and a bijection $f$ from $X$ onto $\ran(F)$ such that $F=f\circ H$. In this case we have $f=\delta_F$ and $H=\delta_F^{-1}\circ F$.
\end{enumerate}
\end{corollary}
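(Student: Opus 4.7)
The plan is to establish the equivalences by the cyclic chain (i) $\Rightarrow$ (ii) $\Rightarrow$ (iii) $\Rightarrow$ (i), using Proposition~\ref{prop:ACqriTFAE3451} as the workhorse. The key observation is that once $\delta_F$ is assumed to be one-to-one, the quasi-inverse machinery from the preceding results collapses into a genuine inverse, so in fact no appeal to AC is needed for this corollary.

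For (i) $\Rightarrow$ (ii), I would start from the definition of idempotizability: $F$ is quasi-range-idempotent ($\ran(\delta_F)=\ran(F)$) and $\delta_F$ is injective. Together these make $\delta_F$ a bijection from $X$ onto $\ran(F)$, so the inverse $\delta_F^{-1}\colon\ran(F)\to X$ is well defined. Setting $H=\delta_F^{-1}\circ F$, one has $F=\delta_F\circ H$ by construction, and the identity $\delta_H=\delta_F^{-1}\circ\delta_F=\id$ shows that $H$ is idempotent. Uniqueness of $H$ is then immediate from the injectivity of $\delta_F$: any operation $H'\colon X^n\to X$ satisfying $F=\delta_F\circ H'$ must equal $\delta_F^{-1}\circ F$.

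The step (ii) $\Rightarrow$ (iii) is formal: take $f=\delta_F$ and the $H$ produced by (ii). For (iii) $\Rightarrow$ (i), I would compute the diagonal section of a composition: from $F=f\circ H$ with $H$ idempotent, I get $\delta_F=f\circ\delta_H=f\circ\id=f$. Since $f$ is a bijection from $X$ onto $\ran(F)$, this at once yields $\ran(\delta_F)=\ran(F)$ (quasi-range-idempotence) and the injectivity of $\delta_F$, hence idempotizability. The concluding identifications $f=\delta_F$ and $H=\delta_F^{-1}\circ F$ in (iii) then drop out by composing with $f^{-1}=\delta_F^{-1}$ on the left.

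I expect no serious obstacle here; the content is a straightforward specialization of Proposition~\ref{prop:ACqriTFAE3451} to the injective case. The only point requiring care is the uniqueness clause in (ii), which must be extracted directly from the injectivity of $\delta_F$ rather than from the statement of the previous proposition (which only guarantees existence), and the verification that $\delta_{f\circ H}=f\circ\delta_H$, used twice above, which follows just by evaluating both sides at $x^n$.
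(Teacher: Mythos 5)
Your proof is correct and matches the paper's intent: the paper offers no explicit argument, stating only that the corollary is "immediately derived" from Proposition~2.4, and your cyclic verification (i)~$\Rightarrow$~(ii)~$\Rightarrow$~(iii)~$\Rightarrow$~(i) supplies exactly the routine details. Your observation that AC is not needed here, because the quasi-inverse of the injective $\delta_F$ collapses to a genuine inverse, is also consistent with the analogous remark the authors make later in the proof of Corollary~\ref{cor:sfa5sfds}.
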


In the special case when the function is range-idempotent, we have the following result.

\begin{proposition}\label{prop:sd5f7xx}
For any $n\in\N$, if $F\colon X^n\to X$ is range-idempotent and $\delta_{F}$ is one-to-one, then $F$ is idempotent.
\end{proposition}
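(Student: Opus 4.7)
The plan is short: unfold the definitions and use injectivity of $\delta_F$ to strip off an outer application.

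First I would recall that range-idempotence of $F\colon X^n\to X$ means $\delta_F\circ F=F$. Evaluating this identity at a diagonal string $x^n$ gives $\delta_F(F(x^n))=F(x^n)$, i.e.\ $\delta_F(\delta_F(x))=\delta_F(x)$ for every $x\in X$. Thus $\delta_F\circ\delta_F=\delta_F$ as an equation between self-maps of $X$.

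Next, since $\delta_F\colon X\to X$ is one-to-one by hypothesis, the equality $\delta_F(\delta_F(x))=\delta_F(x)$ lets us cancel $\delta_F$ on the outside, yielding $\delta_F(x)=x$ for every $x\in X$. Hence $\delta_F=\id$, which is exactly the definition of idempotence for $F$.

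There is no real obstacle: the argument is a two-line cancellation, the only subtle point being that range-idempotence, via Proposition (the one preceding, stating $\delta_F\circ\delta_F=\delta_F$), already supplies the key identity to which injectivity is applied.
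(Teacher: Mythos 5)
Your proof is correct and follows exactly the paper's argument: derive $\delta_F\circ\delta_F=\delta_F$ from range-idempotence and then cancel the outer $\delta_F$ using injectivity to conclude $\delta_F=\id$. Nothing to add.
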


\begin{proof}
Since $\delta_{F}\circ \delta_{F}=\delta_{F}$ we simply have $\delta_{F}=\delta_{F}^{-1}\circ \delta_{F}=\id$.
\end{proof}

\section{Barycentric associativity}

In this section we discuss the concept of B-associativity (mainly for $\varepsilon$-standard operations) and provide some results on this property which will be useful in the investigation of B-preassociative functions.

Let us first recall the characterization of the class of quasi-arithmetic mean functions given by Kolmogoroff \cite{Kol30} and Nagumo \cite{Nag30}. This result, originally stated for real functions over a closed interval $[a,b]$, was extended in \cite[Sect.~4.2]{GraMarMesPap09} to functions over an arbitrary real interval $\mathbb{I}$. The following theorem gives the characterization following Kolmogoroff (we extend the domain of the functions $F\colon\bigcup_{n\geqslant 1}\mathbb{I}^n\to\mathbb{I}$ of this characterization to $\mathbb{I}^*$, see Remark~\ref{rem:Fe1}(a)). Nagumo's characterization is the same except that the strict increasing monotonicity of each function $F_n$ is replaced with the strict internality of $F_2$ (i.e., $x<y$ implies $x<F_2(x,y)<y$). Note also that a variant and a relaxation of Kolmogoroff-Nagumo's characterization can also be found in \cite{FodMar97,FodMar06,Mar00}.

\begin{theorem}[Kolmogoroff-Nagumo]\label{thm:KolNag30}
Let $\mathbb{I}$ be a nontrivial real interval (i.e., nonempty and not a singleton), possibly unbounded. A function $F\colon\mathbb{I}^*\to\mathbb{I}$ is B-associative and, for every $n\in\N$, the $n$-ary part $F_n$ is symmetric, continuous, idempotent, and strictly increasing in each argument if and only if there exists a continuous and strictly monotonic function $f\colon\mathbb{I}\to\R$ such that
$$
F_n(\bfx) ~=~ f^{-1}\bigg(\frac{1}{n}\sum_{i=1}^nf(x_i)\bigg),\qquad n\in\N.
$$
\end{theorem}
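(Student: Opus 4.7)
The plan is to treat sufficiency and necessity separately, with necessity reducing to the Acz\'el characterization of bisymmetric binary operations. For sufficiency, assume $F_n(\bfx)=f^{-1}\bigl(\frac{1}{n}\sum_{i=1}^n f(x_i)\bigr)$ for some continuous strictly monotonic $f\colon\mathbb{I}\to\R$. Symmetry, continuity, idempotence (since $F_n(x^n)=f^{-1}(f(x))=x$), and strict increasing monotonicity of each $F_n$ all transfer from the arithmetic mean through $f$ and $f^{-1}$. For B-associativity, observe that $f\bigl(F_{|\bfy|}(\bfy)\bigr)=\frac{1}{|\bfy|}\sum_i f(y_i)$, so the substrings $\bfy$ and $F(\bfy)^{|\bfy|}$ share the same length and the same total $f$-sum; hence $F(\bfx\bfy\bfz)$ and $F(\bfx F(\bfy)^{|\bfy|}\bfz)$ yield the same arithmetic mean of $f$-values and therefore coincide.

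For necessity, my first step is to prove bisymmetry of $F_2$ from B-associativity and symmetry. The auxiliary identity $F_4(a,a,b,b)=F_2(a,b)$ is obtained by applying B-associativity with $\bfx=a$, $\bfy=a,b$, $\bfz=b$, which gives $F_4(a,a,b,b)=F_4(a,m,m,b)$ where $m=F_2(a,b)$; a symmetric rearrangement into $F_4(m,m,a,b)$ followed by another application of B-associativity to the substring $a,b$ collapses everything to $F_4(m^4)=m$ via idempotence. Then for arbitrary $x,y,u,v$, two disjoint B-associativity substitutions on $x,y$ and on $u,v$ give
$$
F_4(x,y,u,v)\,=\,F_4\bigl(F_2(x,y)^2,\,F_2(u,v)^2\bigr)\,=\,F_2\bigl(F_2(x,y),\,F_2(u,v)\bigr),
$$
the last equality by the auxiliary identity. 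Repeating the same computation after the symmetric rearrangement $F_4(x,y,u,v)=F_4(x,u,y,v)$ yields $F_2\bigl(F_2(x,u),F_2(y,v)\bigr)$, and comparing the two expressions gives bisymmetry of $F_2$.

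At this point $F_2$ is symmetric, continuous, strictly increasing in each argument, idempotent, and bisymmetric on the real interval $\mathbb{I}$. Acz\'el's classical theorem on such operations yields a continuous strictly monotonic $f\colon\mathbb{I}\to\R$ with $F_2(x,y)=f^{-1}\bigl(\tfrac{1}{2}(f(x)+f(y))\bigr)$. To promote this to all arities, set $M^{(k/n)}(x,y):=F_n(x^k,y^{n-k})$, well defined by symmetry. A direct B-associativity calculation yields the composition identity $M^{(\beta\gamma)}(x,y)=M^{(\gamma)}\bigl(M^{(\beta)}(x,y),y\bigr)$ for rational $\beta,\gamma\in[0,1]$, and iteration from $M^{(1/2)}$ gives the explicit form $M^{(\alpha)}(x,y)=f^{-1}\bigl(\alpha f(x)+(1-\alpha)f(y)\bigr)$ on dyadic $\alpha$, extended to all rational $\alpha\in[0,1]$ by the monotonicity of $\alpha\mapsto M^{(\alpha)}(x,y)$ and the density of dyadics. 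Finally, B-associativity with $\bfy=x_1\cdots x_{n-1}$ gives $F_n(\bfx)=M^{((n-1)/n)}\bigl(F_{n-1}(x_1,\dots,x_{n-1}),x_n\bigr)$, and inserting the inductive formula for $F_{n-1}$ together with the explicit form of $M^{((n-1)/n)}$ produces $F_n(\bfx)=f^{-1}\bigl(\frac{1}{n}\sum f(x_i)\bigr)$, completing the induction.

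The main obstacle is the invocation of Acz\'el's theorem on bisymmetric operations, which supplies the generator $f$ from purely algebraic data. All preceding steps are combinatorial rearrangements built on B-associativity, symmetry, and idempotence, and all subsequent steps amount to a clean induction; but constructing $f$ from the bisymmetric law $F_2$ is genuinely analytic, resting on the solution of a Cauchy-type equation in the continuous strictly monotonic category. This is the unique place where continuity and strict monotonicity cannot be dispensed with.
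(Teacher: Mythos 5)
The paper does not prove this theorem: it quotes it as the classical Kolmogoroff--Nagumo result and refers to \cite{Kol30,Nag30} and to \cite[Sect.~4.2]{GraMarMesPap09} for the proof, so there is no internal argument to compare yours against. Judged on its own, your proof is correct and follows a legitimate (and standard modern) route that differs from the historical one: Kolmogoroff and Nagumo construct the generator $f$ directly from the two--variable weighted means $M^{(\alpha)}(x,y)=F_n(x^ky^{n-k})$ by an analytic limit argument, whereas you first extract bisymmetry of $F_2$ by purely combinatorial use of B-associativity, symmetry and idempotence (the identity $F_4(a,a,b,b)=F_2(a,b)$ and the two decompositions of $F_4(x,y,u,v)$ are both verified correctly), and then outsource all the analysis to Acz\'el's 1948 theorem on continuous, strictly monotone, reflexive, bisymmetric operations. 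What this buys is a clean separation between the algebraic content (B-associativity) and the analytic content (solving the Jensen-type equation), at the cost of invoking a theorem that historically postdates the result being proved; both the lifting identity $M^{(\beta\gamma)}(x,y)=M^{(\gamma)}(M^{(\beta)}(x,y),y)$ and the final induction $F_n(\bfx)=M^{((n-1)/n)}(F_{n-1}(x_1\cdots x_{n-1}),x_n)$ are sound.

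Two small points deserve tightening. First, the well-definedness of $M^{(\alpha)}$ as a function of the rational $\alpha$ alone (independence of the chosen representation $k/n$) is not a consequence of symmetry, as you state, but of B-associativity together with idempotence: by Proposition~\ref{eq:BA-eqdef}(iv), $F_{jn}(x^{jk}y^{j(n-k)})=F_{jn}(F_n(x^ky^{n-k})^{jn})=F_n(x^ky^{n-k})$. This independence is exactly what lets you put two rationals over a common denominator when you argue that $\alpha\mapsto M^{(\alpha)}(x,y)$ is monotone, so it should be made explicit. Second, in the composition identity $M^{(\beta\gamma)}(x,y)=M^{(\gamma)}(M^{(\beta)}(x,y),y)$ the inner value $M^{(\beta)}(x,y)=F_m(x^jy^{m-j})$ can only be absorbed by B-associativity into an outer $F_n$ if it appears there with multiplicity exactly $m$; this forces a choice of compatible denominators ($\gamma=m/n$), which again rests on the representation-independence above. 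Neither point is a genuine obstruction, but both are used silently.
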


Interestingly, the following corollary shows that the idempotence property can be removed from Kolmogoroff-Nagumo's characterization.

\begin{fact}\label{fact:BassocAWRI}
If an operation $F\colon X^*\to X\cup\{\varepsilon\}$ is B-associative, then it is arity-wise range-idempotent (take $\bfx\bfz=\varepsilon$ in Definition~\ref{de:Bassoc}).
\end{fact}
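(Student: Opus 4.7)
The proof is essentially a one-line substitution, as the parenthetical hint in the statement already indicates. The plan is to specialize the B-associativity identity $F(\bfx\bfy\bfz) = F(\bfx F(\bfy)^{|\bfy|}\bfz)$ by setting $\bfx = \bfz = \varepsilon$. This yields $F(\bfy) = F(F(\bfy)^{|\bfy|})$ for every $\bfy \in X^*$, which is verbatim the defining condition of arity-wise range-idempotence introduced in Section~2.

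There is no real obstacle; the only point that merits a quick check is that the specialization is meaningful in every boundary case. If $\bfy = \varepsilon$, then $|\bfy| = 0$ and $F(\bfy)^{|\bfy|} = \varepsilon$, so the identity reduces to the tautology $F(\varepsilon) = F(\varepsilon)$. If $F(\bfy) = \varepsilon$ for some $\bfy \neq \varepsilon$, then again $F(\bfy)^{|\bfy|} = \varepsilon$, and Remark~\ref{rem:Fe1} has already observed that in this case the default value $F_0(\varepsilon)$ must equal $\varepsilon$, so both sides coincide with $\varepsilon$. Thus the substitution is valid without any restriction on $\bfy$, and the conclusion follows immediately.
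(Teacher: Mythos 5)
Your proof is correct and is exactly the paper's argument: the parenthetical in the statement is the entire intended proof, namely setting $\bfx=\bfz=\varepsilon$ in the identity $F(\bfx\bfy\bfz)=F(\bfx F(\bfy)^{|\bfy|}\bfz)$ to obtain $F(\bfy)=F(F(\bfy)^{|\bfy|})$ for all $\bfy\in X^*$. Your boundary-case checks are harmless but unnecessary, since the identity is an unconditional instance of the defining equation regardless of whether $F(\bfy)=\varepsilon$.
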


\begin{corollary}\label{cor:KolNagId}
Let $F\colon X^*\to X\cup\{\varepsilon\}$ be a B-associative operation such that $F(\bfx)\in X$ for every $\bfx\in X^*\setminus\{\varepsilon\}$ and let $n\in\N$. If $\delta_{F_n}$ is one-to-one, then $\delta_{F_n}=\id$. In particular, the idempotence property is not needed in the Kolmogoroff-Nagumo's characterization (Theorem~\ref{thm:KolNag30}).
\end{corollary}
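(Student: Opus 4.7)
The plan is to reduce the claim to Proposition \ref{prop:sd5f7xx} (range-idempotent plus one-to-one diagonal implies idempotent) by first showing that each $F_n$ is range-idempotent.

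First, I would invoke Fact \ref{fact:BassocAWRI}: since $F$ is B-associative, it is arity-wise range-idempotent, i.e., $F(F(\bfx)^{|\bfx|})=F(\bfx)$ for every $\bfx\in X^*$. The hypothesis $F(\bfx)\in X$ for every nonempty $\bfx$ ensures that for $\bfx\in X^n$ the string $F(\bfx)^n$ lies in $X^n$, so the above equality unpacks to $\delta_{F_n}(F_n(\bfx))=F_n(\bfx)$, i.e., $\delta_{F_n}\circ F_n=F_n$. Hence $F_n$ is range-idempotent for each $n\in\N$.

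Now suppose $\delta_{F_n}$ is one-to-one. By Proposition~\ref{prop:sd5f7xx} applied to $F_n$, the operation $F_n$ is idempotent, which is exactly $\delta_{F_n}=\id$. (If one prefers to avoid citing that proposition: setting $\bfx=x^n$ in $\delta_{F_n}\circ F_n=F_n$ yields $\delta_{F_n}\circ\delta_{F_n}=\delta_{F_n}$; one-to-oneness then cancels one factor of $\delta_{F_n}$ to give $\delta_{F_n}=\id$.)

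For the second assertion, under the hypotheses of Theorem~\ref{thm:KolNag30} each $F_n$ is continuous and strictly increasing in each argument, so its diagonal section $\delta_{F_n}$ is continuous and strictly increasing, hence one-to-one. The first part of the corollary then gives $\delta_{F_n}=\id$ for every $n\in\N$, i.e., idempotence is automatic and can be dropped from the list of assumptions in Kolmogoroff-Nagumo's characterization. No real obstacle is expected; the only point to keep in mind is to use the standing hypothesis $F(\bfx)\in X$ for $\bfx\neq\varepsilon$ in order to read the arity-wise range-idempotence condition as an equation about $\delta_{F_n}\circ F_n$.
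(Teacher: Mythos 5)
Your proof is correct and follows essentially the same route as the paper: Fact~\ref{fact:BassocAWRI} gives arity-wise range-idempotence, the hypothesis $F(\bfx)\in X$ for $\bfx\neq\varepsilon$ lets one read this as $\delta_{F_n}\circ F_n=F_n$, and Proposition~\ref{prop:sd5f7xx} finishes. Your added remarks (the direct cancellation argument and the observation that strict monotonicity of $F_n$ makes $\delta_{F_n}$ one-to-one in the Kolmogoroff--Nagumo setting) are accurate elaborations of points the paper leaves implicit.
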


\begin{proof}
Since $F$ is B-associative, it is arity-wise range-idempotent by Fact~\ref{fact:BassocAWRI}. Since $F$ is $\varepsilon$-standard, this means that $\delta_{F_n}\circ F_n=F_n$ for every $n\in\N$. We then conclude by Proposition~\ref{prop:sd5f7xx}.
\end{proof}

The existence of nonsymmetric B-associative operations can be illustrated by the following example, introduced in \cite[p.~81]{Mar98} (see also \cite{MarMatTou99}). For every $z\in\R$, the $\varepsilon$-standard operation $M^z\colon\R^*\to\R\cup\{\varepsilon\}$ defined as
\begin{equation}\label{eq:DeltF}
M^z_n(\bfx) ~=~ \frac{\sum_{i=1}^nz^{n-i}(1-z)^{i-1}{\,}x_i}{\sum_{i=1}^nz^{n-i}(1-z)^{i-1}}{\,},\qquad n\in\N,
\end{equation}
is B-associative. Actually, one can show \cite{MarMatTom} that any B-associative $\varepsilon$-standard operation over $\R$ whose $n$-ary part is a nonconstant linear function for every $n\in\N$ is necessarily one of the operations $M^z$ $(z\in\R)$. More generally, the class of B-associative polynomial $\varepsilon$-standard operations (i.e., such that the $n$-ary part is a polynomial function for every $n\in\N$) over an infinite commutative integral domain $D$ was also characterized in \cite{MarMatTom}. This characterization shows that, up to singular cases, the typical B-associative polynomial $\varepsilon$-standard operations are linear, that is, of the form $M^z$, where $z\in D$.

The following proposition yields alternative equivalent definitions of B-associativity. Note that an analog equivalence holds for the associativity property; see \cite{CouMar11,LehMarTeh,MarTeh,MarTehb,MarTeh2}. The equivalence between definitions (i) and (iv) was observed in \cite[Sect.~2.3]{GraMarMesPap09}.

\begin{proposition}\label{eq:BA-eqdef}
Let $F\colon X^*\to X\cup\{\varepsilon\}$ be an operation. The following assertions are equivalent:
\begin{enumerate}
\item[(i)] $F$ is B-associative.

\item[(ii)] For every $\bfx\bfy\bfz,\bfx'\bfy'\bfz'\in X^*$ such that $\bfx\bfy\bfz=\bfx'\bfy'\bfz'$ we have $F(\bfx F(\bfy)^{|\bfy|}\bfz)=F(\bfx' F(\bfy')^{|\bfy'|}\bfz')$.

\item[(iii)] For every $\bfx\bfy\bfz\in X^*$ we have $F(F(\bfx\bfy)^{|\bfx\bfy|}\bfz)=F(\bfx F(\bfy\bfz)^{|\bfy\bfz|})$.

\item[(iv)] For every $\bfx\bfy\in X^*$ we have $F(\bfx\bfy)=F(F(\bfx)^{|\bfx|}F(\bfy)^{|\bfy|})$.
\end{enumerate}
\end{proposition}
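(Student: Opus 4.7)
The plan is to establish the equivalences via two loops through (i): the trivial cycle (i)$\Leftrightarrow$(ii) and the cycle (i)$\Rightarrow$(iii)$\Rightarrow$(iv)$\Rightarrow$(i).

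The easy part is (i)$\Leftrightarrow$(ii). For (i)$\Rightarrow$(ii), applying (i) to each side shows both equal the common value $F(\bfx\bfy\bfz)=F(\bfx'\bfy'\bfz')$. For the converse, specialize (ii) to the trivial decomposition $\bfx'=\bfx\bfy\bfz$, $\bfy'=\bfz'=\varepsilon$, using $F(\varepsilon)^{0}=\varepsilon$. Similarly, (i)$\Rightarrow$(iii) follows at once by applying (i) to the splits $\varepsilon\cdot(\bfx\bfy)\cdot\bfz$ and $\bfx\cdot(\bfy\bfz)\cdot\varepsilon$, which shows that both sides of (iii) collapse to $F(\bfx\bfy\bfz)$.

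The substantive work lies in (iii)$\Rightarrow$(iv) and (iv)$\Rightarrow$(i). Both begin by extracting arity-wise range-idempotence $F(F(\bfx)^{|\bfx|})=F(\bfx)$ from the weaker-looking identity: set $\bfy=\bfz=\varepsilon$ in (iii), or $\bfy=\varepsilon$ in (iv). For (iii)$\Rightarrow$(iv), specializing (iii) to $\bfz=\varepsilon$ and to $\bfx=\varepsilon$, and then invoking AWRI, yields the two partial forms
\[
F(\bfx\bfy)=F(\bfx F(\bfy)^{|\bfy|}) \quad\text{and}\quad F(\bfy\bfz)=F(F(\bfy)^{|\bfy|}\bfz).
\]
Chaining these gives $F(\bfx\bfy)=F(\bfx F(\bfy)^{|\bfy|})=F(F(\bfx)^{|\bfx|}F(\bfy)^{|\bfy|})$, which is (iv).

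The hardest step is (iv)$\Rightarrow$(i), because (iv) only governs two-way splits while (i) concerns a three-way split; the derivation requires nesting (iv) with itself and using AWRI at each occurrence of a power of a value. Using (iv) twice together with AWRI, one first obtains the intermediate identity $F(F(\bfy)^{|\bfy|}\bfz)=F(F(\bfy)^{|\bfy|}F(\bfz)^{|\bfz|})=F(\bfy\bfz)$. Then applying (iv) forward to the pair $\bigl(\bfx,\,F(\bfy)^{|\bfy|}\bfz\bigr)$, substituting by this intermediate identity, and finally applying (iv) backward to $(\bfx,\bfy\bfz)$ yields $F(\bfx F(\bfy)^{|\bfy|}\bfz)=F(\bfx\bfy\bfz)$. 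The main obstacle is bookkeeping of the length superscripts (notably $|F(\bfy)^{|\bfy|}\bfz|=|\bfy|+|\bfz|=|\bfy\bfz|$) through this chain of substitutions, which is why AWRI is essential for converting nested expressions $F(F(\cdot)^{|\cdot|})$ back into the original values.
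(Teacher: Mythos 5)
Your proof is correct and follows essentially the same route as the paper's: the cycle (i) $\Rightarrow$ (ii) $\Rightarrow$ (iii) $\Rightarrow$ (iv) $\Rightarrow$ (i), with arity-wise range-idempotence extracted by specializing to empty substrings, the same two partial forms chained for (iii) $\Rightarrow$ (iv), and (iv) $\Rightarrow$ (i) obtained by a nested double application of (iv) combined with arity-wise range-idempotence. The only cosmetic differences are that you make the trivial implication (ii) $\Rightarrow$ (i) explicit and that in (iv) $\Rightarrow$ (i) you split $\bfx F(\bfy)^{|\bfy|}\bfz$ as $\bfx\cdot\bigl(F(\bfy)^{|\bfy|}\bfz\bigr)$ where the paper splits it as $\bigl(\bfx F(\bfy)^{|\bfy|}\bigr)\cdot\bfz$.
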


\begin{proof}
We have (i) $\Rightarrow$ (ii) $\Rightarrow$ (iii) trivially. Let us prove (iii) $\Rightarrow$ (iv). Taking $\bfy\bfz=\varepsilon$ shows that $F$ is arity-wise range-idempotent. Taking $\bfx=\varepsilon$ and then $\bfz=\varepsilon$, we obtain $F(F(\bfx)^{|\bfx|}\bfy)=F(\bfx F(\bfy)^{|\bfy|})=F(F(\bfx \bfy)^{|\bfx \bfy|})=F(\bfx \bfy)$ and therefore $F(F(\bfx)^{|\bfx|}F(\bfy)^{|\bfy|})=F(\bfx\bfy)$. Finally, let us prove that (iv) $\Rightarrow$ (i). Clearly, $F$ is arity-wise range-idempotent (take $\bfy=\varepsilon$). For every $\bfx\bfy\bfz\in X^*$ we then have
\begin{eqnarray*}
F(\bfx F(\bfy)^{|\bfy|}\bfz) &=& F(F(\bfx F(\bfy)^{|\bfy|})^{|\bfx\bfy|}F(\bfz)^{|\bfz|}) ~\stackrel{(*)}{=}~ F(F(F(\bfx)^{|\bfx|}F(\bfy)^{|\bfy|})^{|\bfx\bfy|}F(\bfz)^{|\bfz|})\\
&=& F(F(\bfx\bfy)^{|\bfx\bfy|}F(\bfz)^{|\bfz|}) ~=~ F(\bfx\bfy\bfz),
\end{eqnarray*}
where, at $(*)$, we have used (iv) and the fact that $F$ is arity-wise range-idempotent.
\end{proof}

The following proposition shows that the definition of B-associativity (Definition~\ref{de:Bassoc}) remains unchanged if we upper bound the length of the string $\bfx\bfz$ by one. Our proof makes use of the B-preassociativity property and hence will be postponed to Section~\ref{sec:BPA}.

\begin{proposition}\label{prop:AsimpEquivVersion}
An operation $F\colon X^*\to X\cup\{\varepsilon\}$ is B-associative if and only if for every $\bfx\bfy\bfz\in X^*$ such that $|\bfx\bfz|\leqslant 1$ we have $F(\bfx\bfy\bfz)=F(\bfx F(\bfy)^{|\bfy|}\bfz)$.
\end{proposition}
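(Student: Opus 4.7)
The plan is to follow the hint in the paper: reduce the problem to showing that the restricted identity already forces full B-preassociativity, and then close the loop using arity-wise range-idempotence.

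\textbf{Step 1 (arity-wise range-idempotence).} Necessity of the restricted identity is obvious. For sufficiency, I first specialize the hypothesis to $\bfx=\bfz=\varepsilon$ (whose lengths sum to $0\leq 1$) and obtain
\[
F(\bfy) ~=~ F\bigl(F(\bfy)^{|\bfy|}\bigr),\qquad \bfy\in X^*,
\]
i.e., arity-wise range-idempotence, exactly as in Fact~\ref{fact:BassocAWRI}.

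\textbf{Step 2 (B-preassociativity from the restricted identity).} I claim that if $\bfy,\bfy'\in X^n$ satisfy $F(\bfy)=F(\bfy')$, then $F(\bfx\bfy\bfz)=F(\bfx\bfy'\bfz)$ for every $\bfx\bfz\in X^*$. I argue by induction on $k=|\bfx\bfz|$. For $k=0$ the claim is trivial. For $k=1$, say $\bfx=x$ and $\bfz=\varepsilon$; two applications of the hypothesis give
\[
F(x\bfy) ~=~ F\bigl(xF(\bfy)^n\bigr) ~=~ F\bigl(xF(\bfy')^n\bigr) ~=~ F(x\bfy'),
\]
using $|\bfy|=|\bfy'|=n$ and $F(\bfy)=F(\bfy')$; the symmetric case $\bfx=\varepsilon$, $\bfz=z$ is analogous. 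For the inductive step $k\geqslant 2$, I may assume $|\bfx|\geqslant 1$ (otherwise swap the roles of $\bfx$ and $\bfz$) and write $\bfx=x\bfx'$. Set $m=|\bfx'\bfy\bfz|=|\bfx'\bfy'\bfz|$. Since the outer context $x$ has length $1$, the hypothesis applies with middle string $\bfx'\bfy\bfz$ and yields
\[
F(x\bfx'\bfy\bfz) ~=~ F\bigl(xF(\bfx'\bfy\bfz)^{m}\bigr),\qquad F(x\bfx'\bfy'\bfz) ~=~ F\bigl(xF(\bfx'\bfy'\bfz)^{m}\bigr).
\]
The inductive hypothesis, applied to the strictly shorter outer context $\bfx'\bfz$ (of length $k-1$), gives $F(\bfx'\bfy\bfz)=F(\bfx'\bfy'\bfz)$; substituting this equality into the right-hand sides above yields $F(\bfx\bfy\bfz)=F(\bfx\bfy'\bfz)$.

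\textbf{Step 3 (conclusion).} Fix an arbitrary $\bfx\bfy\bfz\in X^*$. By Step~1, $F(\bfy)=F(F(\bfy)^{|\bfy|})$, and the two strings $\bfy$ and $F(\bfy)^{|\bfy|}$ have the same length. The B-preassociativity established in Step~2 therefore gives
\[
F(\bfx\bfy\bfz) ~=~ F\bigl(\bfx F(\bfy)^{|\bfy|}\bfz\bigr),
\]
which is B-associativity. The main obstacle is the inductive step of Step~2: one must recognize that peeling off a single letter from $\bfx$ (or $\bfz$) keeps the remaining outer context of length $1$, so that the restricted identity can be used to compress and then decompress a now much longer interior substring; this is precisely the maneuver that converts the length-$1$ hypothesis into full preassociativity.
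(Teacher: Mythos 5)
Your proposal is correct and follows essentially the same route as the paper: the restricted identity with $\bfx\bfz=\varepsilon$ gives arity-wise range-idempotence, the restricted identity with $|\bfx\bfz|=1$ gives B-preassociativity by growing the outer context one letter at a time, and the two properties together yield B-associativity. The only difference is organizational: you inline as a single explicit induction what the paper delegates to Proposition~\ref{prop:AsimpEquivVersionP} (restricted B-preassociativity implies full B-preassociativity) and Proposition~\ref{prop:BA-PBA1} (B-preassociativity plus arity-wise range-idempotence implies B-associativity).
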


Proposition~\ref{prop:AsimpEquivVersion} simply states that an operation $F\colon X^*\to X\cup\{\varepsilon\}$ is B-associative if and only if it satisfies the following two conditions:
\begin{enumerate}
\item[(a)] $F(\bfy)=F(F(\bfy)^{|\bfy|})$ for every $\bfy\in X^*$ (arity-wise range-idempotence),

\item[(b)] $F(x\bfy z)=F(xF(\bfy z)^{|\bfy z|})=F(F(x\bfy)^{|x\bfy|}z)$ for every $x\bfy z\in X^*$.
\end{enumerate}
In particular, an idempotent $\varepsilon$-standard operation $F\colon X^*\to X\cup\{\varepsilon\}$ is B-associative if and only if condition (b) above holds.

Interestingly, Proposition~\ref{prop:AsimpEquivVersion} also shows how a B-associative $\varepsilon$-standard operation $F\colon X^*\to X\cup\{\varepsilon\}$ can be constructed by choosing first $F_1$, then $F_2$, and so forth. In fact, $F_1$ can be chosen arbitrarily provided that it satisfies $F_1\circ F_1=F_1$. Then, if $F_k$ is already chosen for some $k\in\N$, then $F_{k+1}$ can be chosen arbitrarily from among the solutions of the following equations
\begin{eqnarray*}
&& \delta_{F_{k+1}}\circ F_{k+1} ~=~ F_{k+1}{\,},\label{eq:1rtz78z}\\
&& F_{k+1}(x\bfy z) ~=~ F_{k+1}(xF_k(\bfy z)^k) ~=~ F_{k+1}(F_k(x\bfy)^kz),\qquad x\bfy z\in X^{k+1}.\label{eq:2rtz78z}
\end{eqnarray*}

In general, finding all the possible functions $F_{k+1}$ is not an easy task. However, from the observations above we can immediately derive the following fact.

\begin{fact}\label{fact:86dsa}
Let $F\colon X^*\to X\cup\{\varepsilon\}$ be a B-associative operation.
\begin{enumerate}
\item[(a)] If $F_k$ is symmetric for some $k\geqslant 2$, then so is $F_{k+1}$.

\item[(b)] If $F_k$ is constant for some $k\in\N$, then so is $F_{k+1}$.

\item[(c)] For any sequence $c\in X^{\N}$ and every $n\in\N$, the function $G\colon X^*\to X$ defined by $G_k=F_k$, if $k\leqslant n$, and $G_k=c_k$, if $k>n$, is B-associative.
\end{enumerate}
\end{fact}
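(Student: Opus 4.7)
The plan is to apply the B-associativity equation $F(\bfx\bfy\bfz)=F(\bfx F(\bfy)^{|\bfy|}\bfz)$ to $F_{k+1}$ with $\bfy$ of length $k$ placed in the two extremal positions, which isolates the dependence of $F_{k+1}$ on its first and last arguments; items (a) and (b) then fall out immediately. For (c), the key observation is that the B-associativity substitution preserves the total length of the outer string, so the verification splits into two cases according to whether this length is $\leqslant n$ or $>n$.

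For part (a), applying B-associativity first with $\bfy$ in positions $\{1,\ldots,k\}$ and then with $\bfy$ in positions $\{2,\ldots,k+1\}$ yields, for every $x_1,\ldots,x_{k+1}\in X$,
\begin{equation*}
F_{k+1}(x_1\cdots x_{k+1}) ~=~ F_{k+1}\bigl(F_k(x_1\cdots x_k)^k\, x_{k+1}\bigr) ~=~ F_{k+1}\bigl(x_1\, F_k(x_2\cdots x_{k+1})^k\bigr).
\end{equation*}
Since $F_k$ is symmetric, the first equality shows that $F_{k+1}$ is invariant under every permutation of its first $k$ arguments, and the second under every permutation of its last $k$ arguments. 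The two corresponding subgroups of $S_{k+1}$ together contain all adjacent transpositions $(i,i+1)$ for $i=1,\ldots,k$, and hence generate $S_{k+1}$ as soon as $k\geqslant 2$.

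For part (b), if $F_k$ is constant, equal to some $c\in X$, the same two equalities reduce to $F_{k+1}(x_1\cdots x_{k+1})=F_{k+1}(c^k x_{k+1})=F_{k+1}(x_1 c^k)$. Comparing the outer members gives $F_{k+1}(c^k x_{k+1})=F_{k+1}(x_1 c^k)$ for all $x_1, x_{k+1}\in X$; setting $x_1=c$ yields $F_{k+1}(c^k x_{k+1})=F_{k+1}(c^{k+1})$ independently of $x_{k+1}$, so $F_{k+1}(x_1\cdots x_{k+1})$ equals the constant $F_{k+1}(c^{k+1})$.

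For part (c), I would verify Definition~\ref{de:Bassoc} directly. The crucial point is that $|\bfx G(\bfy)^{|\bfy|}\bfz|=|\bfx\bfy\bfz|$, so the substitution preserves the total length. Writing $N=|\bfx\bfy\bfz|$, if $N\leqslant n$ then every substring appearing on either side of the equation has length at most $n$ and $G$ coincides with $F$ on it, so the identity reduces to the B-associativity of $F$. If $N>n$, then both $G(\bfx\bfy\bfz)$ and $G(\bfx G(\bfy)^{|\bfy|}\bfz)$ equal $c_N$ by definition of $G$, and the equation holds. I expect no serious obstacle; the only point worth emphasizing is that the hypothesis $k\geqslant 2$ in (a) is genuinely needed, since for $k=1$ the two permutation subgroups are trivial and the conclusion fails in general (witness the nonsymmetric operations $M^z$ of equation~(\ref{eq:DeltF}) with $z\neq 1/2$).
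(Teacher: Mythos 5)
Your proof is correct and follows essentially the same route the paper takes: the paper derives this fact ``immediately'' from the two identities $F_{k+1}(x\bfy z)=F_{k+1}(F_k(x\bfy)^k z)=F_{k+1}(xF_k(\bfy z)^k)$ coming from Proposition~\ref{prop:AsimpEquivVersion}, which are exactly the equalities you use for (a) and (b), and your case split on $|\bfx\bfy\bfz|\lessgtr n$ for (c) is the intended argument. The only caveat is the tacit assumption (shared with the paper's own statement) that $F(\bfy)\in X$ for $\bfy\neq\varepsilon$, without which the length-preservation step $|\bfx G(\bfy)^{|\bfy|}\bfz|=|\bfx\bfy\bfz|$ in (c) can fail.
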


The following proposition gives a refinement of Fact~\ref{fact:86dsa}(a).

\begin{proposition}\label{prop:2sym2}
Let $F\colon X^*\to X\cup\{\varepsilon\}$ be a B-associative operation and let $k\geqslant 2$ be a integer. If the function $\bfy\in X^k\mapsto F_{k+2}(x\bfy z)$ is symmetric for every $x,z\in X$, then so is the function $\bfy\in X^{k+1}\mapsto F_{k+3}(x\bfy z)$ for every $x,z\in X$.
\end{proposition}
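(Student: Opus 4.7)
The plan is to exploit B-associativity to collapse a length-$(k{+}2)$ substring of $x\bfy z$ into an encapsulated scalar, after which the given symmetry of $F_{k+2}$ can be applied to that scalar.

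Writing $\bfy=y_0y_1\cdots y_k\in X^{k+1}$ and fixing $x,z\in X$, my first move will be to apply B-associativity to the prefix $xy_0\cdots y_k$ of $x\bfy z$ (a substring of length $k+2$), yielding
$$
F_{k+3}(xy_0\cdots y_kz) ~=~ F_{k+3}\bigl(F_{k+2}(xy_0\cdots y_k)^{k+2}\,z\bigr),
$$
so that the dependence of the left-hand side on $\bfy$ passes entirely through the scalar $c:=F_{k+2}(xy_0\cdots y_k)$. Reading $c$ as $F_{k+2}(x\,\bfw\,y_k)$ with $\bfw=y_0\cdots y_{k-1}\in X^k$, the hypothesis (with boundary letters $x$ and $y_k$) makes $c$ invariant under every permutation of $(y_0,\ldots,y_{k-1})$; hence so is $F_{k+3}(x\bfy z)$. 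A completely parallel application of B-associativity to the suffix $y_0\cdots y_kz$ yields
$$
F_{k+3}(xy_0\cdots y_kz) ~=~ F_{k+3}\bigl(x\,F_{k+2}(y_0\cdots y_kz)^{k+2}\bigr),
$$
and the hypothesis (now with boundary letters $y_0$ and $z$) shows that $F_{k+3}(x\bfy z)$ is also invariant under every permutation of $(y_1,\ldots,y_k)$.

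It only remains to observe that the stabilizer of $y_0$ and the stabilizer of $y_k$ together generate the whole symmetric group on $\{y_0,\ldots,y_k\}$: the first contains the adjacent transpositions $(y_i,y_{i+1})$ for $0\leq i\leq k-2$ and the second contains $(y_{k-1},y_k)$, so together they furnish every adjacent transposition of $S_{k+1}$. No step of the plan looks genuinely resistant; the only delicate point is this last one, which is precisely where the assumption $k\geq 2$ enters (for $k=1$ the two stabilizers would each be trivial and could not produce the required swap).
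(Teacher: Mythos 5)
Your proof is correct and follows essentially the same route as the paper's: collapse the length-$(k+2)$ prefix and suffix of $x\bfy z$ via B-associativity, invoke the hypothesis to get invariance under permutations of the first $k$ and of the last $k$ letters of $\bfy$, and observe that these two subgroups together generate the full symmetric group on the $k+1$ letters (which is exactly where $k\geqslant 2$ enters). The only blemish is that in your final paragraph the two stabilizers are labelled the wrong way round --- it is the stabilizer of $y_k$ that contains the transpositions $(y_i,y_{i+1})$ for $0\leqslant i\leqslant k-2$ and the stabilizer of $y_0$ that contains $(y_{k-1},y_k)$ --- but this does not affect the argument.
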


\begin{proof}
Let $\bfy\in X^{k+1}$. Then there exists $\bfu\in X^{k-1\geqslant 1}$ such that $\bfy=y_1\bfu y_{k+1}$. Since $F$ is B-associative, we have
\begin{eqnarray*}
F_{k+3}(x\bfy z) &=& F_{k+3}(xy_1\bfu y_{k+1}z) ~=~ F_{k+3}(F_{k+2}(xy_1\bfu y_{k+1})^{k+2}z)\\
&=& F_{k+3}(xF_{k+2}(y_1\bfu y_{k+1}z)^{k+2}).
\end{eqnarray*}
Since this expression is symmetric on $y_1\bfu$ and $\bfu y_{k+1}$, it must be symmetric on $\bfy$.
\end{proof}

As Fact~\ref{fact:86dsa}(c) shows, if $F\colon X^*\to X\cup\{\varepsilon\}$ is B-associative, then the function $F_{k+1}$ need not be idempotent, even if $F_k$ is idempotent. To give another example, consider the idempotent $\varepsilon$-standard operation $F\colon \R^*\to\R\cup\{\varepsilon\}$ defined by $F_n(\bfx)=x_1$ for every $n\in\N$ and let $F'\colon\R^*\to\R\cup\{\varepsilon\}$ be the $\varepsilon$-standard operation defined by $F'_n(\bfx)=F_n(\bfx)$ if $n\leqslant k$ for some $k\in\N$, and $F'_n(\bfx)=\max(x_1,0)$ if $n>k$. Both operations $F$ and $F'$ are B-associative.

On the other hand, we do not know whether or not $F_k$ is idempotent whenever so is $F_{k+1}$. However, for any $k,n\in\N$ we can prove that $F_k$ is idempotent whenever so is $F_{kn}$. Indeed, this observation immediately follows from the identity $\delta_{F_{kn}}=\delta_{F_{kn}}\circ\delta_{F_{k}}$, which can be obtained by setting $\bfx=x^k$ in the equation $F(\bfx^n)=F(F(\bfx)^{kn})$.

It is a well known fact that any associative $\Ast$-ary operation $F\colon X^*\to X\cup\{\varepsilon\}$ is completely determined by its nullary, unary, and binary parts (see, e.g., \cite{LehMarTeh,MarTeh,MarTehb,MarTeh2} and the references therein). As the examples above show, this property is not satisfied by the B-associative operations.

The following proposition shows that any B-associative $\varepsilon$-standard operation $F\colon X^*\to X\cup\{\varepsilon\}$ is completely determined by either of the functions $\delta_{F_k}^{r}$ or $\delta_{F_k}^{\ell}$ for every integer $k\geqslant 0$.

\begin{proposition}\label{prop:d87sad6ads}
Let $F\colon X^*\to X\cup\{\varepsilon\}$ and $G\colon X^*\to X\cup\{\varepsilon\}$ be two B-associative $\varepsilon$-standard operations such that $\delta_{F_k}^{r}=\delta_{G_k}^{r}$ or $\delta_{F_k}^{\ell}=\delta_{G_k}^{\ell}$ for every integer $k\geqslant 0$. Then $F=G$.
\end{proposition}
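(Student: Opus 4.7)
The natural strategy is induction on $n = |\bfx|$ to show $F_n = G_n$, exploiting B-associativity to reduce any $n$-ary evaluation to a single application of $\delta^{r}_{F_n}$ (resp.\ $\delta^{\ell}_{F_n}$) composed with $F_{n-1}$ (resp.\ $G_{n-1}$).

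For the base steps, the cases $n=0$ and $n=1$ are immediate from the conventions $\delta^{r}_{F_0} = F_0$ and $\delta^{r}_{F_1} = F_1$, so the hypothesis $\delta^{r}_{F_k} = \delta^{r}_{G_k}$ for $k=0,1$ gives $F_0 = G_0$ and $F_1 = G_1$. The case $n=2$ also follows directly, since $\delta^{r}_{F_2}(xy) = F_2(xy)$ by definition, so the hypothesis at $k=2$ already yields $F_2 = G_2$.

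For the inductive step, fix $n \geqslant 3$ and assume $F_k = G_k$ for every $k < n$. Given any $\bfx = x_1\cdots x_n \in X^n$, apply B-associativity of $F$ with the decomposition $\bfx = \varepsilon \cdot (x_1 \cdots x_{n-1}) \cdot x_n$: since $F$ is $\varepsilon$-standard, the element $a := F(x_1\cdots x_{n-1})$ lies in $X$, so
$$
F_n(x_1\cdots x_n) ~=~ F\bigl(F(x_1\cdots x_{n-1})^{n-1}\,x_n\bigr) ~=~ F_n(a^{n-1}x_n) ~=~ \delta^{r}_{F_n}(a, x_n).
$$
By the induction hypothesis applied to the string of length $n-1$, we have $a = F_{n-1}(x_1\cdots x_{n-1}) = G_{n-1}(x_1\cdots x_{n-1})$, and by assumption $\delta^{r}_{F_n} = \delta^{r}_{G_n}$. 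The same computation carried out for $G$ therefore produces the same value, and hence $F_n(\bfx) = G_n(\bfx)$. The argument for the $\delta^{\ell}$ case is symmetric, using the decomposition $\bfx = x_1 \cdot (x_2\cdots x_n) \cdot \varepsilon$, which yields $F_n(\bfx) = \delta^{\ell}_{F_n}(x_1, F_{n-1}(x_2\cdots x_n))$ and the analogous conclusion.

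There is no serious obstacle; the only point requiring care is to make sure the intermediate value $a$ lies in $X$ (so that the string $a^{n-1}x_n$ is well-defined as an element of $X^n$), which is guaranteed by $\varepsilon$-standardness, and to handle the low-arity base cases by the extended definitions $\delta^{r}_{F_0}=\delta^{\ell}_{F_0}=F_0$ and $\delta^{r}_{F_1}=\delta^{\ell}_{F_1}=F_1$ set up in the preliminaries.
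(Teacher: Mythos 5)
Your proof is correct and follows essentially the same route as the paper: an induction on the arity in which B-associativity collapses $F_n(\bfx)$ to $\delta^{r}_{F_n}(F_{n-1}(x_1\cdots x_{n-1})\, x_n)$ (or the dual form for $\delta^{\ell}$), after which the induction hypothesis and the assumed equality of the $\delta$-sections finish the step. The separate treatment of $n=2$ and the explicit remark that $\varepsilon$-standardness keeps the intermediate value in $X$ are harmless refinements of the paper's more terse argument.
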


\begin{proof}
For any $k\geqslant 0$, if $F_k=G_k$ and for instance $\delta_{F_{k+1}}^{r}=\delta_{G_{k+1}}^{r}$, then $F_{k+1}=G_{k+1}$. Indeed, for every $\bfx z\in X^{k+1}$, we have
$$
F(\bfx z) ~=~ F(F(\bfx)^kz) ~=~ \delta_{F_{k+1}}^{r}(F_k(\bfx)z) ~=~ \delta_{G_{k+1}}^{r}(G_k(\bfx)z) ~=~ G(G(\bfx)^kz) ~=~ G(\bfx z).
$$
The result then follows from an immediate induction.
\end{proof}

Proposition~\ref{prop:d87sad6ads} motivates the following natural and important question: Find necessary and sufficient conditions on the functions $\delta_{F_k}^{r}$ or $\delta_{F_k}^{\ell}$ ($k\in\N$) for an $\varepsilon$-standard operation $F\colon X^*\to X\cup\{\varepsilon\}$ to be B-associative. The following theorem provides an answer to this question.

\begin{theorem}\label{thm:deltaFl}
Let $\phi_1\colon X\to X$ and, for every integer $k\geqslant 2$, let $\phi_k\colon X^2\to X$ and $u_k\in\{\ell,r\}$ be given. Then there exists a B-associative $\varepsilon$-standard operation $F\colon X^*\to X\cup\{\varepsilon\}$ such that $F_1=\phi_1$ and $\delta_{F_k}^{u_k}=\phi_k$ for every integer $k\geqslant 2$ if and only if the following conditions hold:
\begin{enumerate}
\item[(a)] for every $k\in\N$, we have
\begin{equation}\label{eq:thma}
\phi_{k+1}(xy) ~=~
\begin{cases}
\phi_{k+1}(\delta_{\phi_k}(x){\,}y) & \mbox{if $u_{k+1}=r$},\\
\phi_{k+1}(x{\,}\delta_{\phi_k}(y)) & \mbox{if $u_{k+1}=\ell$},
\end{cases}
\end{equation}

\item[(b)] there exists an arity-wise range-idempotent $\varepsilon$-standard operation $G\colon X^*\to X\cup\{\varepsilon\}$ such that $G_1=\phi_1$ and, for every $k\in\N$, we have
\begin{equation}\label{eq:thmb1}
G_{k+1}(x\bfy z) ~=~
\begin{cases}
\phi_{k+1}(G_k(x\bfy)z), & \mbox{if $u_{k+1}=r$},\\
\phi_{k+1}(xG_k(\bfy z)), & \mbox{if $u_{k+1}=\ell$},
\end{cases}
\end{equation}
and
\begin{equation}\label{eq:thmb2}
G_{k+1}(x\bfy z) ~=~
\begin{cases}
\delta_{G_{k+1}}^{\ell}(xG_k(\bfy z)) & \mbox{if $u_{k+1}=r$},\\
\delta_{G_{k+1}}^{r}(G_k(x\bfy)z) & \mbox{if $u_{k+1}=\ell$}.
\end{cases}
\end{equation}
\end{enumerate}
If these conditions hold, then we can take $F=G$.
\end{theorem}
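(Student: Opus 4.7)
The plan is to prove both directions by taking $F=G$, with B-associativity reduced to the simpler identity of Proposition~\ref{prop:AsimpEquivVersion}. I first record the key observation that $\delta_{\phi_k}=\delta_{F_k}$ whenever $\phi_k=\delta_{F_k}^{u_k}$: since $\phi_k(x,x)=F_k(x^{k-1}x)=F_k(x^k)$ in either case $u_k\in\{r,\ell\}$, and for $k=1$ we have $\delta_{\phi_1}=\phi_1=F_1$ by the paper's convention.

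For necessity, suppose $F$ is B-associative with $F_1=\phi_1$ and $\delta_{F_k}^{u_k}=\phi_k$, and set $G=F$. Arity-wise range-idempotence is Fact~\ref{fact:BassocAWRI}. For equation \eqref{eq:thma} in the case $u_{k+1}=r$, one writes
\[
\phi_{k+1}(xy)=F_{k+1}(x^k y)=F_{k+1}(F_k(x^k)^k y)=\phi_{k+1}(\delta_{\phi_k}(x){\,}y),
\]
where the middle step is B-associativity applied to the substring $x^k$; the case $u_{k+1}=\ell$ is symmetric. Equations \eqref{eq:thmb1} and \eqref{eq:thmb2} are then direct translations of B-associativity applied to $F_{k+1}(x\bfy z)$: replacing $x\bfy$ by $F_k(x\bfy)^{|x\bfy|}$ and invoking $\phi_{k+1}=\delta_{F_{k+1}}^{u_{k+1}}$ gives one of \eqref{eq:thmb1}, \eqref{eq:thmb2}, while replacing $\bfy z$ by $F_k(\bfy z)^{|\bfy z|}$ gives the other.

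For sufficiency, assume (a) and (b) and set $F=G$. Then $F_1=\phi_1$ trivially. The identity $\delta_{F_k}^{u_k}=\phi_k$ is proved by induction on $k\geqslant 2$: when $u_{k+1}=r$, taking $x\bfy=x^k$ in \eqref{eq:thmb1} yields
\[
\delta_{G_{k+1}}^r(xy)=G_{k+1}(x^k y)=\phi_{k+1}(G_k(x^k){\,}y)=\phi_{k+1}(\delta_{\phi_k}(x){\,}y)=\phi_{k+1}(xy),
\]
where the third equality rests on the inductive hypothesis together with the opening observation ($\delta_{G_k}=\delta_{\phi_k}$), and the last is condition (a). The case $u_{k+1}=\ell$ is symmetric. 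Finally, B-associativity of $F$ follows from Proposition~\ref{prop:AsimpEquivVersion}: arity-wise range-idempotence is given, and for every $x\bfy z\in X^{k+1}$, equations \eqref{eq:thmb1} and \eqref{eq:thmb2} together supply both identities $F_{k+1}(x\bfy z)=\delta_{F_{k+1}}^r(F_k(x\bfy){\,}z)=\delta_{F_{k+1}}^\ell(x{\,}F_k(\bfy z))$, regardless of whether $u_{k+1}=r$ or $u_{k+1}=\ell$, which are precisely the two conditions to be checked. The main technical point is the inductive identification $\delta_{G_k}^{u_k}=\phi_k$ via condition (a); apart from that, the proof is a mechanical translation split into two symmetric cases $u_k\in\{r,\ell\}$.
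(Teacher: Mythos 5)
Your proof is correct and follows essentially the same route as the paper's: take $F=G$ (resp.\ $G=F$), establish $\delta_{F_k}^{u_k}=\phi_k$ by induction using \eqref{eq:thmb1} and \eqref{eq:thma}, and conclude B-associativity from condition (b) via Proposition~\ref{prop:AsimpEquivVersion}. Your explicit observation that $\delta_{\phi_k}=\delta_{G_k}$ is exactly the point the paper uses implicitly in its chain of equalities, and your more detailed treatment of the necessity direction (which the paper dismisses as immediate) is a faithful unpacking rather than a different argument.
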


\begin{proof}
(Necessity) We take $G=F$. The result then follows immediately.

(Sufficiency) We take $F=G$. Then we have $F_1=\phi_1$ trivially. Let us show by induction on $k\in\N$ that $\delta_{F_k}^{u_k}=\phi_k$. The case $k=1$ reduces to $F_1=\phi_1$. Suppose that the result holds for any $k\geqslant 1$ and let us show that it still holds for $k+1$. Assume for instance that $u_{k+1}=r$ (the other case can be dealt with dually). We have
$$
\delta_{F_{k+1}}^{u_{k+1}}(xy) ~=~ \phi_{k+1}(\delta_{F_{k}}(x)y) ~=~ \phi_{k+1}(\delta_{\phi_k}(x)y) ~=~ \phi_{k+1}(xy),
$$
where the first equality holds by Eq.~(\ref{eq:thmb1}), the second equality by the induction hypothesis, and the third equality by Eq.~(\ref{eq:thma}).

Combining condition (b) with Proposition~\ref{prop:AsimpEquivVersion}, we then observe that $F$ is B-associative. This completes the proof of the proposition.
\end{proof}

\begin{example}
Let $\phi_1\colon \R\to \R$ and $\phi_k\colon \R^2\to \R$ be defined as $\phi_1(x)=a_1x$ with $a_1\neq 0$ and $\phi_k(xy)=a_kx+b_ky$ with $a_k\neq 0$ and $b_k\neq 0$ for every integer $k\geqslant 2$. Then there exists a B-associative $\varepsilon$-standard operation $F\colon \R^*\to \R\cup\{\varepsilon\}$ such that $F_1=\phi_1$ and $\delta_{F_k}^{r}=\phi_k$ for every integer $k\geqslant 2$ if and only if $a_1=1$ and there exists $z\in\R\setminus\{0,1\}$ such that
\begin{equation}\label{eq:akbk}
a_{k+1} ~=~ z{\,}\frac{\Delta_k^z}{\Delta_{k+1}^z}\quad\mbox{and}\quad b_{k+1} ~=~ 1-a_{k+1} ~=~ \frac{(1-z)^k}{\Delta_{k+1}^z}{\,},\qquad k\in\N,
\end{equation}
where $\Delta_k^z=\sum_{i=1}^kz^{k-i}(1-z)^{i-1}$. In this case, $F$ is precisely the operation $M^z$ defined in Eq.~(\ref{eq:DeltF}).

Let us use Theorem~\ref{thm:deltaFl} to establish this result. By Eq.~(\ref{eq:thma}) we must have $a_k+b_k=1$ for every $k\geqslant 2$. Let us now construct the $\varepsilon$-standard operation $G\colon\R^*\to\R\cup\{\varepsilon\}$. Since $\phi_1=G_1$ satisfies $\phi_1\circ\phi_1=\phi_1$, we must have $a_1=1$. Then, by Eq.~(\ref{eq:thmb1}) we must have
$$
G_k(\bfx) ~=~ \sum_{i=1}^k\bigg(\prod_{j=i+1}^ka_j\bigg){\,}b_i{\,}x_i
$$
(we have set $b_1=1$) and we observe that each $G_k$ is range-idempotent. We also observe that Eq.~(\ref{eq:thmb2}) is then equivalent to the system of equations
$$
a_{k+1}b_i ~=~ a_ib_{i-1}{\,}\bigg(1-\prod_{j=1}^{k+1}a_j\bigg),\qquad i=2,\ldots,k+1,~\mbox{and}~k\geqslant 2.
$$
For every fixed value $z\in\R\setminus\{0,1\}$ of $a_2$, this system provides a unique sequence $(a_2,a_3,\ldots)$, which is given by Eq.~(\ref{eq:akbk}).
\end{example}

\section{Barycentric preassociativity}
\label{sec:BPA}

In this section we investigate the B-preassociativity property (see Definition~\ref{de:BPA}). In particular, we give a characterization of the B-preassociative and arity-wise quasi-range-idempotent functions as compositions of the form $F_n=f_n\circ H_n$, where $H\colon X^*\to X\cup\{\varepsilon\}$ is a B-associative $\varepsilon$-standard operation and $f_n\colon \ran(H_n)\to Y$ is one-to-one (Theorem~\ref{thm:FactoriAWRI-BPA237111}). We also derive a generalization of Kolmogoroff-Nagumo's characterization of the quasi-arithmetic mean functions to barycentrically preassociative functions (Theorem~\ref{thm:KolmExt}).

Just as for B-associativity, B-preassociativity may have different equivalent forms. The following proposition gives an equivalent definition based on two equalities of values.

\begin{proposition}
A function $F\colon X^*\to Y$ is B-preassociative if and only if for every $\bfx\bfx'\bfy\bfy'\in X^*$ such that $|\bfx|=|\bfx'|$ and $|\bfy|=|\bfy'|$ we have
$$
F(\bfx)=F(\bfx')~\mbox{ and }~F(\bfy)=F(\bfy')\quad\Rightarrow\quad F(\bfx\bfy)=F(\bfx'\bfy').
$$
\end{proposition}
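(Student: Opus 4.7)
The statement is an equivalence between the one-sided substitution version of B-preassociativity from Definition~\ref{de:BPA} and a simultaneous two-sided concatenation version. My plan is to prove both directions by applying the relevant implication twice, performing the substitution in the left and right blocks separately and then chaining the resulting equalities. No auxiliary construction is needed; the whole argument is string bookkeeping.

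For necessity, I assume $F$ is B-preassociative and take strings $\bfx,\bfx',\bfy,\bfy'\in X^*$ with $|\bfx|=|\bfx'|$, $|\bfy|=|\bfy'|$, $F(\bfx)=F(\bfx')$, and $F(\bfy)=F(\bfy')$. First, I apply Definition~\ref{de:BPA} to the equality $F(\bfx)=F(\bfx')$ (viewing $\bfx,\bfx'$ as the middle blocks) with empty left context and right context $\bfy$, which yields $F(\bfx\bfy)=F(\bfx'\bfy)$. Next, I apply Definition~\ref{de:BPA} to the equality $F(\bfy)=F(\bfy')$ with left context $\bfx'$ and empty right context, which yields $F(\bfx'\bfy)=F(\bfx'\bfy')$. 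Chaining these two equalities gives $F(\bfx\bfy)=F(\bfx'\bfy')$, as required.

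For sufficiency, I assume the stated two-block condition and take $\bfx\bfy\bfy'\bfz\in X^*$ with $|\bfy|=|\bfy'|$ and $F(\bfy)=F(\bfy')$. I first apply the assumption to the pairs $(\bfy,\bfy')$ and $(\bfz,\bfz)$ (the latter trivially satisfying $F(\bfz)=F(\bfz)$ with matching lengths), obtaining $F(\bfy\bfz)=F(\bfy'\bfz)$. I then apply it again to the pairs $(\bfx,\bfx)$ and $(\bfy\bfz,\bfy'\bfz)$, which yields $F(\bfx\bfy\bfz)=F(\bfx\bfy'\bfz)$. This is exactly the condition in Definition~\ref{de:BPA}, so $F$ is B-preassociative.

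The proof is essentially a trivial manipulation, so there is no real obstacle; the only point to be mindful of is the careful matching of the middle block and the contexts in each invocation of the substitution rule so that the required length conditions $|\bfy|=|\bfy'|$ and $|\bfx|=|\bfx'|$ are actually verified at the moment of application. No appeal to earlier results (such as Proposition~\ref{eq:BA-eqdef} or any idempotence property) is needed.
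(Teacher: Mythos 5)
Your proof is correct and follows essentially the same argument as the paper: for necessity, two applications of Definition~\ref{de:BPA} chained through the intermediate string $\bfx'\bfy$, and for sufficiency, two applications of the two-block condition with a trivially equal block. The only cosmetic difference is that in the sufficiency direction you attach $\bfz$ first and then $\bfx$, whereas the paper does it in the opposite order; this is immaterial.
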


\begin{proof}
(Necessity) Let $\bfx\bfx'\bfy\bfy'\in X^*$ such that $|\bfx|=|\bfx'|$ and $|\bfy|=|\bfy'|$. If $F(\bfx)=F(\bfx')$ and $F(\bfy)=F(\bfy')$, then we have $F(\bfx\bfy)=F(\bfx'\bfy)=F(\bfx'\bfy')$.

(Sufficiency) Let $\bfx\bfy\bfy'\bfz\in X^*$ such that $|\bfy|=|\bfy'|$. If $F(\bfy)=F(\bfy')$, then $F(\bfx\bfy)=F(\bfx\bfy')$ and finally $F(\bfx\bfy\bfz)=F(\bfx\bfy'\bfz)$.
\end{proof}

The following result provides a simplified but equivalent definition of B-preassocia{\-}tivity (exactly as Proposition~\ref{prop:AsimpEquivVersion} did for B-associativity).

\begin{proposition}\label{prop:AsimpEquivVersionP}
A function $F\colon X^*\to Y$ is B-preassociative if and only if for every $\bfx\bfy\bfy'\bfz\in X^*$ such that $|\bfy|=|\bfy'|$ and $|\bfx\bfz|= 1$ we have
$$
F(\bfy) ~=~ F(\bfy')\quad\Rightarrow\quad F(\bfx\bfy\bfz) ~=~ F(\bfx\bfy'\bfz).
$$
\end{proposition}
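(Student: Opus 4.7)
The plan is as follows. The ``only if'' direction is immediate, since the condition in Definition~\ref{de:BPA} specializes in particular to the strings $\bfx\bfz$ with $|\bfx\bfz|=1$. For the converse, assume the restricted implication holds; I will prove full B-preassociativity by decoupling the prefix $\bfx$ and the suffix $\bfz$ in the definition and dealing with them in two separate one-variable inductions.

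First I would show, by induction on $k=|\bfx|$, the one-sided claim that if $|\bfy|=|\bfy'|$ and $F(\bfy)=F(\bfy')$ then $F(\bfx\bfy)=F(\bfx\bfy')$. The case $k=0$ is trivial and $k=1$ is precisely the restricted hypothesis applied with $\bfz=\varepsilon$. For $k\geqslant 2$, write $\bfx=x\bfx'$; the induction hypothesis yields $F(\bfx'\bfy)=F(\bfx'\bfy')$, and since the two strings $\bfx'\bfy$ and $\bfx'\bfy'$ have the same length, a further application of the restricted hypothesis (with the single-letter prefix $x$ and $\bfz=\varepsilon$) delivers $F(x\bfx'\bfy)=F(x\bfx'\bfy')$, as required.

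A symmetric induction on $|\bfz|$, using the restricted hypothesis with single-letter suffixes and $\bfx=\varepsilon$, yields the dual one-sided claim: $F(\bfy)=F(\bfy')$ implies $F(\bfy\bfz)=F(\bfy'\bfz)$ for every $\bfz\in X^*$. Chaining the two one-sided implications (first extend on the right by $\bfz$, then on the left by $\bfx$; the intermediate strings $\bfy\bfz$ and $\bfy'\bfz$ have a common length, so the prefix statement applies) gives $F(\bfx\bfy\bfz)=F(\bfx\bfy'\bfz)$ for arbitrary $\bfx,\bfz\in X^*$, which is exactly B-preassociativity.

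There is no serious obstacle here; the essential observation is that a direct induction on $|\bfx\bfz|$ would force an awkward case split to decide from which end to peel a letter (and would also need to keep both $\bfx$ and $\bfz$ of positive length in the induction), whereas handling the prefix and the suffix independently reduces the argument to two routine inductions followed by a one-line composition.
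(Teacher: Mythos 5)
Your proof is correct and is essentially the paper's argument spelled out in detail: the paper simply states that ``repeated applications of the stated condition'' yield B-preassociativity, which is exactly your two one-letter-at-a-time inductions on the prefix and the suffix followed by their composition. No issues.
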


\begin{proof}
(Necessity) Trivial.

(Sufficiency) Repeated applications of the stated condition obviously show that $F$ is B-preassociative.
\end{proof}

As mentioned in the introduction, B-preassociativity generalizes B-associativity. Moreover, we have the following result.

\begin{proposition}\label{prop:BA-PBA1}
An operation $F\colon X^*\to X\cup\{\varepsilon\}$ is B-associative if and only if it is B-preassociative and arity-wise range-idempotent.
\end{proposition}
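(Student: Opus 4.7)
The plan is to prove each direction separately; both are short and use material already available in the excerpt. For the necessity direction, I would start with a B-associative $F$ and derive the two conclusions. Arity-wise range-idempotence is exactly Fact~\ref{fact:BassocAWRI} (set $\bfx\bfz=\varepsilon$ in the defining identity). To obtain B-preassociativity, I would take $\bfx\bfy\bfy'\bfz\in X^*$ with $|\bfy|=|\bfy'|$ and $F(\bfy)=F(\bfy')$, and apply B-associativity on both sides of the substitution:
$$
F(\bfx\bfy\bfz)=F(\bfx F(\bfy)^{|\bfy|}\bfz)=F(\bfx F(\bfy')^{|\bfy'|}\bfz)=F(\bfx\bfy'\bfz).
$$

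For the sufficiency direction, fix $\bfx\bfy\bfz\in X^*$ and aim for the identity $F(\bfx\bfy\bfz)=F(\bfx F(\bfy)^{|\bfy|}\bfz)$. The key observation is that, when $F(\bfy)\in X$, the strings $\bfy$ and $F(\bfy)^{|\bfy|}$ share the common length $|\bfy|$, and arity-wise range-idempotence asserts precisely the equality $F(\bfy)=F(F(\bfy)^{|\bfy|})$. Applying B-preassociativity to this pair, inserted between $\bfx$ and $\bfz$, yields the desired equality in one line. The case $\bfy=\varepsilon$ is trivial since both sides reduce to $F(\bfx\bfz)$.

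The only point requiring care, and the main obstacle, is the degenerate situation $\bfy\neq\varepsilon$ with $F(\bfy)=\varepsilon$, since then $F(\bfy)^{|\bfy|}=\varepsilon$ is of length $0\neq |\bfy|$ and B-preassociativity does not directly apply. However, arity-wise range-idempotence forces $F(\varepsilon)=\varepsilon$ exactly as in Remark~\ref{rem:Fe1}, so the target identity reduces to $F(\bfx\bfy\bfz)=F(\bfx\bfz)$. This case can be dispatched either under the convention of Remark~\ref{rem:Fe1} on the meaning of $F(\bfy)^{|\bfy|}$, or subsumed by restricting to $\varepsilon$-standard operations, the setting in which all subsequent uses of the proposition take place.
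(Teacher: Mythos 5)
Your argument is essentially the paper's: necessity via Fact~\ref{fact:BassocAWRI} together with the two-step substitution $F(\bfx\bfy\bfz)=F(\bfx F(\bfy)^{|\bfy|}\bfz)=F(\bfx F(\bfy')^{|\bfy'|}\bfz)=F(\bfx\bfy'\bfz)$, and sufficiency by feeding the identity $F(\bfy)=F(F(\bfy)^{|\bfy|})$ into B-preassociativity. Both directions coincide with the paper's proof line for line where they overlap.

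The degenerate case you single out deserves more credit than you give it: it is not a notational quibble to be settled by convention, but an actual failure of the sufficiency direction in full generality. Take $X=\{a\}$ and define $F(\varepsilon)=F(a)=\varepsilon$ and $F(a^n)=a$ for every $n\geqslant 2$. This $F$ is arity-wise range-idempotent (indeed $F(F(a)^1)=F(\varepsilon)=\varepsilon=F(a)$ and $F(F(a^n)^n)=F(a^n)=a$ for $n\geqslant 2$), and it is trivially B-preassociative because over a one-letter alphabet two strings of equal length are identical. Yet it is not B-associative: $F(aa)=a$ while $F(F(a)^{1}a)=F(\varepsilon a)=F(a)=\varepsilon$. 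So the equivalence genuinely requires $F(\bfy)\in X$ for every $\bfy\neq\varepsilon$ (as holds for $\varepsilon$-standard operations). Your first proposed escape route --- reading Remark~\ref{rem:Fe1} as a convention on the meaning of $F(\bfy)^{|\bfy|}$ --- is not available: that remark only shows that $F_0(\varepsilon)$ must equal $\varepsilon$, and the power of $\varepsilon$ is unambiguously $\varepsilon$, of length $0$. Your second escape route (restricting to operations with $F(\bfx)\in X$ off the empty string, which is how the proposition is applied in the sequel) is the correct one. The paper's own proof of sufficiency passes over this case silently, so your attention to it is a genuine improvement --- provided the restriction is stated as a hypothesis rather than as a convention.
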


\begin{proof}
(Necessity) By Fact~\ref{fact:BassocAWRI} we have that $F$ is arity-wise range-idempotent. To see that it is also B-preassociative, let $\bfx\bfy\bfy'\bfz\in X^*$ such that $|\bfy|=|\bfy'|$ and $F(\bfy)=F(\bfy')$. Then we have $F(\bfx\bfy\bfz) = F(\bfx F(\bfy)^{|\bfy|}\bfz) = F(\bfx F(\bfy')^{|\bfy'|}\bfz) = F(\bfx\bfy'\bfz)$.

(Sufficiency) Let $\bfx\bfy\bfz\in X^*$. We then have $F(\bfy)=F(F(\bfy)^{|\bfy|})$ and hence $F(\bfx\bfy\bfz)=F(\bfx F(\bfy)^{|\bfy|}\bfz)$.
\end{proof}

\begin{remark}
\begin{enumerate}
\item[(a)] From Proposition~\ref{prop:BA-PBA1} it follows that a B-preassociative and idempotent operation $F\colon X^*\to X\cup\{\varepsilon\}$ is necessarily B-associative.

\item[(b)] The $\varepsilon$-standard sum operation $F\colon\R^*\to\R\cup\{\varepsilon\}$ defined as $F_n(\bfx)=\sum_{i=1}^nx_i$ for every $n\in\N$ is an instance of B-preassociative function which is not B-associative.
\end{enumerate}
\end{remark}

We are now ready to provide a very simple proof of Proposition~\ref{prop:AsimpEquivVersion}.

\begin{proof}[Proof of Proposition~\ref{prop:AsimpEquivVersion}]
The necessity is trivial. To prove the sufficiency, let $F\colon X^*\to X\cup\{\varepsilon\}$ satisfy the stated conditions. Then $F$ is clearly arity-wise range-idempotent. To see that it is B-associative, by Proposition~\ref{prop:BA-PBA1} it suffices to show that it is B-preassociative. Let $\bfx\bfy\bfy'\bfz\in X^*$ such that $|\bfy|=|\bfy'|$ and $|\bfx\bfz|= 1$ and assume that $F(\bfy)=F(\bfy')$. Then we have $F(\bfx\bfy\bfz)=F(\bfx F(\bfy)^{|\bfy|}\bfz)=F(\bfx F(\bfy')^{|\bfy'|}\bfz)=F(\bfx\bfy'\bfz)$. The conclusion then follows from Proposition~\ref{prop:AsimpEquivVersionP}.
\end{proof}

The following corollary provides a way to construct B-associative operations from associative and arity-wise quasi-range-idempotent $\varepsilon$-standard operations.

\begin{corollary}
Assume AC. For every associative and arity-wise quasi-range-idempotent $\varepsilon$-standard operation $H\colon X^*\to X\cup\{\varepsilon\}$, any $\varepsilon$-standard operation $F\colon X^*\to X\cup\{\varepsilon\}$ such that $F_n=g_n\circ H_n$ for every $n\in\N$, where $g_n\in Q(\delta_{H_n})$, is B-associative.
\end{corollary}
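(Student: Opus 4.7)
The plan is to reduce the claim to Proposition~\ref{prop:BA-PBA1}: since $F$ is an $\varepsilon$-standard operation, it is B-associative as soon as it is simultaneously B-preassociative and arity-wise range-idempotent. I will therefore verify these two properties separately, using the quasi-inverse identity $\delta_{H_n}\circ g_n|_{\ran(\delta_{H_n})}=\id|_{\ran(\delta_{H_n})}$ together with the fact that arity-wise quasi-range-idempotence of $H$ gives $\ran(H_n)=\ran(\delta_{H_n})$, and that associativity of $H$ implies preassociativity (a result recalled in the introduction).

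For arity-wise range-idempotence, I would first compute the diagonal section of $F_n$: from $F_n=g_n\circ H_n$ it follows that $\delta_{F_n}=g_n\circ\delta_{H_n}$, so that
$$
\delta_{F_n}\circ F_n ~=~ g_n\circ\delta_{H_n}\circ g_n\circ H_n.
$$
The key step is to show $\delta_{H_n}\circ g_n\circ H_n=H_n$, which is exactly Fact~\ref{fact:fgh} applied with $f=\delta_{H_n}$, $g=g_n$ and $h=H_n$: the hypothesis $\ran(H_n)\subseteq\ran(\delta_{H_n})$ holds because $H_n$ is quasi-range-idempotent. Substituting back gives $\delta_{F_n}\circ F_n=g_n\circ H_n=F_n$, which (since $F$ is $\varepsilon$-standard) is precisely arity-wise range-idempotence.

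For B-preassociativity, suppose $\bfy,\bfy'\in X^n$ satisfy $F_n(\bfy)=F_n(\bfy')$, i.e. $g_n(H_n(\bfy))=g_n(H_n(\bfy'))$. The crucial observation is that $g_n$ is one-to-one on $\ran(\delta_{H_n})$: indeed, if $a,b\in\ran(\delta_{H_n})$ and $g_n(a)=g_n(b)$, then $a=\delta_{H_n}(g_n(a))=\delta_{H_n}(g_n(b))=b$. Since $\ran(H_n)=\ran(\delta_{H_n})$ by quasi-range-idempotence, this forces $H_n(\bfy)=H_n(\bfy')$. Now invoke preassociativity of $H$ (which holds because $H$ is associative) to obtain $H(\bfx\bfy\bfz)=H(\bfx\bfy'\bfz)$ for every $\bfx,\bfz\in X^*$; applying $g_m$ with $m=|\bfx\bfy\bfz|=|\bfx\bfy'\bfz|$ yields $F(\bfx\bfy\bfz)=F(\bfx\bfy'\bfz)$, as required.

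The only non-routine step is the injectivity argument for $g_n$ on $\ran(H_n)$, since a quasi-inverse is in general not injective on all of its domain; everything else is a direct unfolding of the definitions. Once that observation is isolated, both the B-preassociativity and the arity-wise range-idempotence of $F$ fall out cleanly, and Proposition~\ref{prop:BA-PBA1} concludes the proof.
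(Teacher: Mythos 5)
Your proof is correct and follows essentially the same route as the paper's: both reduce to Proposition~\ref{prop:BA-PBA1}, verify arity-wise range-idempotence via $\delta_{F_n}\circ F_n=g_n\circ\delta_{H_n}\circ g_n\circ H_n=g_n\circ H_n$, and deduce B-preassociativity by recovering $H(\bfy)=H(\bfy')$ from $F(\bfy)=F(\bfy')$ and then invoking preassociativity of the associative operation $H$. Your injectivity argument for $g_n$ on $\ran(\delta_{H_n})$ is just the explicit form of the paper's one-line step $H(\bfy)=(\delta_{H_k}\circ F)(\bfy)$, so the two proofs are the same in substance.
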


\begin{proof}
For every $n\in\N$, we have $\delta_{F_n}\circ F_n=g_n\circ\delta_{H_n}\circ g_n\circ H_n=g_n\circ H_n=F_n$, which shows that $F$ is arity-wise range-idempotent. Let us now show that $F$ is B-preassociative. Let $\bfx\bfy\bfy'\bfz\in X^*$ such that $|\bfy|=|\bfy'|=k$ and $F(\bfy)=F(\bfy')$. We have $H(\bfy)=(\delta_{H_k}\circ F)(\bfy)=(\delta_{H_k}\circ F)(\bfy')=H(\bfy')$ and, since $H$ is preassociative, we have $F(\bfx\bfy\bfz)=(g_n\circ H)(\bfx\bfy\bfz)=(g_n\circ H)(\bfx\bfy'\bfz)=F(\bfx\bfy'\bfz)$. By Proposition~\ref{prop:BA-PBA1}, $F$ is B-associative.
\end{proof}

The following two propositions show how new B-preassociative functions can be constructed from given B-preassociative functions by compositions with unary maps.

\begin{proposition}[Right composition]
If $F\colon X^*\to Y$ is B-preassociative then, for every function $g\colon X'\to X$, any function $H\colon X'^*\to Y$ such that $H_n=F_n\circ(g,\ldots,g)$ for every $n\in\N$ is B-preassociative. For instance, the $\varepsilon$-standard operation $F\colon \R^*\to\R\cup\{\varepsilon\}$ defined as $F_n(\bfx)=\frac{1}{n}\sum_{i=1}^nx_i^2$ for every $n\in\N$ is B-preassociative.
\end{proposition}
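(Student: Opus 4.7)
The claim is essentially that B-preassociativity is preserved under pre-composition of each argument slot with a common unary map $g$, and this should follow almost immediately from the definition. My plan is to simply unfold the definition of B-preassociativity for $H$ and use the hypothesis $H_n=F_n\circ(g,\ldots,g)$ to transport the situation to $F$, where B-preassociativity is assumed.

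More precisely, I would fix arbitrary $\bfx\bfy\bfy'\bfz\in X'^*$ with $|\bfy|=|\bfy'|$ and assume $H(\bfy)=H(\bfy')$, my aim being to derive $H(\bfx\bfy\bfz)=H(\bfx\bfy'\bfz)$. Introducing the shorthand $\tilde g(w_1\cdots w_k)=g(w_1)\cdots g(w_k)$ for the componentwise extension of $g$ to strings, the defining relation $H_n=F_n\circ(g,\ldots,g)$ reads $H(\bfw)=F(\tilde g(\bfw))$ for every nonempty $\bfw\in X'^*$. Hence the assumption gives $F(\tilde g(\bfy))=F(\tilde g(\bfy'))$, and since $|\tilde g(\bfy)|=|\bfy|=|\bfy'|=|\tilde g(\bfy')|$, the B-preassociativity of $F$ applied with the strings $\tilde g(\bfx),\tilde g(\bfy),\tilde g(\bfy'),\tilde g(\bfz)$ yields
\[
F\bigl(\tilde g(\bfx){\,}\tilde g(\bfy){\,}\tilde g(\bfz)\bigr) ~=~ F\bigl(\tilde g(\bfx){\,}\tilde g(\bfy'){\,}\tilde g(\bfz)\bigr).
\]
Because $\tilde g$ is a morphism for concatenation, the left-hand side equals $F(\tilde g(\bfx\bfy\bfz))=H(\bfx\bfy\bfz)$ and similarly for the right-hand side, giving the desired equality. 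The case where one or more of $\bfx,\bfy,\bfz$ is empty is handled identically, and if $\bfy=\bfy'=\varepsilon$ the statement is trivial (the nullary value $H_0(\varepsilon)$ does not enter the argument, consistent with the earlier remark that default values are irrelevant for B-preassociativity).

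For the example, I would verify that $F_n(\bfx)=\frac{1}{n}\sum_{i=1}^n x_i^2$ factors as $F_n=M_n\circ(g,\ldots,g)$ where $M_n(\bfx)=\frac{1}{n}\sum_{i=1}^n x_i$ is the arithmetic mean and $g(x)=x^2$. As recalled in the introduction, the $\varepsilon$-standard arithmetic mean is B-associative, hence B-preassociative by Proposition~\ref{prop:BA-PBA1}, so the first part of the proposition applied to $M$ and $g$ immediately yields that $F$ is B-preassociative.

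There is no real obstacle here: the only thing one must be careful about is that $g$ is applied componentwise uniformly in every argument slot (a fact built into the hypothesis $H_n=F_n\circ(g,\ldots,g)$), which is exactly what makes the passage from $\bfy$ to $\tilde g(\bfy)$ compatible with the left/right concatenations by $\bfx$ and $\bfz$. Once this observation is recorded, the proof is a two-line diagram chase through the definition.
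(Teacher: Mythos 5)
Your proof is correct and follows exactly the paper's own argument: extend $g$ componentwise to strings, transport the hypothesis $H(\bfy)=H(\bfy')$ to $F(g(\bfy))=F(g(\bfy'))$, apply B-preassociativity of $F$, and use that the extension respects concatenation. The treatment of the example (factoring through the arithmetic mean) and the remark about empty strings are fine additions but do not change the substance.
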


\begin{proof}
For $n\in\N$, $\bfx=x_1\cdots x_n\in X'^n$, and $g\colon X'\to X$, we denote by $g(\bfx)$ the $n$-string $g(x_1)\cdots g(x_n)$.

Let $\bfx\bfy\bfy'\bfz\in X'^*$ such that $|\bfy|=|\bfy'|$ and assume that $H(\bfy)=H(\bfy')$, that is, $F(g(\bfy))=F(g(\bfy'))$. By B-preassociativity of $F$ we have $F(g(\bfx)g(\bfy)g(\bfz))=F(g(\bfx)g(\bfy')g(\bfz))$ and hence $H(\bfx\bfy\bfz)=H(\bfx\bfy'\bfz)$.
\end{proof}

\begin{proposition}[Left composition]\label{prop:leftcomp56}
Let $F\colon X^*\to Y$ be a B-preassociative function and let $(g_n)_{n\in\N}$ be a sequence of functions from $Y$ to $Y'$. If $g_n|_{\ran(F_n)}$ is one-to-one for every $n\in\N$, then any function $H\colon X^*\to Y'$ such that $H_n=g_n\circ F_n$ for every $n\in\N$ is B-preassociative. For instance, the $\varepsilon$-standard operation $F\colon \R^*\to\R\cup\{\varepsilon\}$ defined as $F_n(\bfx)=\exp(\sum_{i=1}^nx_i)$ for every $n\in\N$ is B-preassociative.
\end{proposition}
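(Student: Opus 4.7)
The plan is to unwind the composition $H_n = g_n \circ F_n$ using the injectivity of $g_n$ on $\ran(F_n)$ to pull equalities of $H$-values back to equalities of $F$-values, apply B-preassociativity of $F$, and then push forward through $g_n$ again.

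More precisely, fix $\bfx\bfy\bfy'\bfz \in X^*$ with $|\bfy|=|\bfy'|=k$ and suppose $H(\bfy) = H(\bfy')$. By definition of $H$, this rewrites as $g_k(F_k(\bfy)) = g_k(F_k(\bfy'))$. Since $F_k(\bfy)$ and $F_k(\bfy')$ both lie in $\ran(F_k)$, and $g_k|_{\ran(F_k)}$ is assumed one-to-one, we may conclude $F_k(\bfy) = F_k(\bfy')$. Then the B-preassociativity of $F$ yields $F(\bfx\bfy\bfz) = F(\bfx\bfy'\bfz)$; composing both sides with $g_n$ for $n = |\bfx\bfy\bfz| = |\bfx\bfy'\bfz|$ gives $H(\bfx\bfy\bfz) = H(\bfx\bfy'\bfz)$, as required.

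For the illustrative example, the $\varepsilon$-standard sum operation $F\colon\R^*\to\R\cup\{\varepsilon\}$ with $F_n(\bfx) = \sum_{i=1}^n x_i$ was already observed to be B-preassociative. Taking $g_n = \exp$ for every $n\in\N$, each $g_n$ is one-to-one on all of $\R$, hence a fortiori one-to-one on $\ran(F_n)$, and the composition $g_n\circ F_n(\bfx) = \exp(\sum_{i=1}^n x_i)$ is therefore B-preassociative by the proposition just proved.

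There is no real obstacle here: the only subtle point is to ensure the injectivity hypothesis is used at the correct arity, namely the common length $k$ of the substrings $\bfy$ and $\bfy'$, which is exactly the index for which $g_k|_{\ran(F_k)}$ gives us the needed cancellation. Everything else is formal substitution.
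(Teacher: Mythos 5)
Your proof is correct and follows essentially the same route as the paper's: both arguments use the injectivity of $g_k$ on $\ran(F_k)$ to pull the equality $H(\bfy)=H(\bfy')$ back to $F(\bfy)=F(\bfy')$, invoke the B-preassociativity of $F$, and then push the resulting equality forward through $g_m$ at the appropriate arity. The only cosmetic difference is that the paper names the partial inverse $f_n=(g_n|_{\ran(F_n)})^{-1}$ explicitly rather than cancelling $g_k$ directly.
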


\begin{proof}
Assume that $g_n|_{\ran(F_n)}$ is one-to-one for every $n\in\N$. Then we have $F_n=f_n\circ H_n$, with $f_n=(g_n|_{\ran(F_n)})^{-1}$. Let $\bfx\bfy\bfy'\bfz\in X^*$ such that $|\bfy|=|\bfy'|=n\geqslant 1$ and assume that $H(\bfy)=H(\bfy')$. We then have $F(\bfy)=(f_n\circ H)(\bfy)=(f_n\circ H)(\bfy')=F(\bfy')$ and hence $F(\bfx\bfy\bfz)=F(\bfx\bfy'\bfz)$ by B-preassociativity of $F$. Setting $m=|\bfx\bfy\bfz|$, it follows that $H(\bfx\bfy\bfz)=(g_m\circ F)(\bfx\bfy\bfz)=(g_m\circ F)(\bfx\bfy'\bfz)=H(\bfx\bfy'\bfz)$.
\end{proof}

\begin{remark}\label{rem:s8d76}
\begin{enumerate}
\item[(a)] If $F\colon X^*\to Y$ is a B-preassociative function and $(g_n)_{n\in\N}$ is a sequence of functions from $X'$ to $X$, then any function $H\colon X'^*\to Y$ such that $H_n=F_n\circ(g_n,\ldots,g_n)$ need not be B-preassociative. For instance, consider the $\varepsilon$-standard sum operation $F_n(\bfx)=\sum_{i=1}^nx_i$ over the reals and the sequence $g_n(x)=\exp(nx)$. Then, for $x_1=\log(1)$, $x_2=\log(2)$, $x'_1=\frac{1}{2}\log(3)$, $x'_2=\frac{1}{2}\log(2)$, and $x_3=0$, we have $H(x_1x_2)=H(x'_1x'_2)$ but $H(x_1x_2x_3)\neq H(x'_1x'_2x_3)$.

\item[(b)] B-preassociativity is not always preserved by left composition of a B-pre\-asso\-ciative function with a unary map. For instance, consider the $\varepsilon$-standard sum operation $F_n(\bfx)=\sum_{i=1}^nx_i$ over the reals and let $g(x)=\max\{x,0\}$. Then for any operation $H\colon\R^*\to\R\cup\{\varepsilon\}$ such that $H_n=g\circ F_n$ for every $n\in\N$, we have $H(-1,-2)=0=H(-1,1)$ but $H(-1,-2,1)=0\neq 1=H(-1,1,1)$. Thus $H$ is not B-preassociative.
\end{enumerate}
\end{remark}

We also have the following two propositions, which generalize Fact~\ref{fact:86dsa} and Proposition~\ref{prop:2sym2}. The proofs are straightforward and thus omitted.

\begin{proposition}
Let $F\colon X^*\to Y$ be a B-preassociative function.
\begin{enumerate}
\item[(a)] If $F_k$ is symmetric for some $k\geqslant 2$, then so is $F_{k+1}$.

\item[(b)] If $F_k$ is constant for some $k\in\N$, then so is $F_{k+1}$.

\item[(c)] For any sequence $c\in Y^{\N}$ and every $n\in\N$, the function $G\colon X^*\to Y$ defined by $G_k=F_k$, if $k\leqslant n$, and $G_k=c_k$, if $k>n$, is B-preassociative.
\end{enumerate}
\end{proposition}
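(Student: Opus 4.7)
To prove (a), the plan is to reduce symmetry of $F_{k+1}$ to invariance under each adjacent transposition $(i,i+1)$ of $\{1,\ldots,k+1\}$. For $i\leqslant k-1$ the swap lies inside the prefix $y_1\cdots y_k$, on which $F_k$ is symmetric, so applying B-preassociativity with $\bfx=\varepsilon$ and $\bfz=y_{k+1}$ transfers the equality from $F_k$ up to $F_{k+1}$. For the remaining transposition $(k,k+1)$, I would dually exploit symmetry of $F_k$ on the suffix $y_2\cdots y_{k+1}$ together with B-preassociativity applied with $\bfx=y_1$ and $\bfz=\varepsilon$.

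For (b), assume $F_k\equiv c$. Given any two strings $\bfy z,\bfy'z'\in X^{k+1}$, the plan is to move from one to the other in two B-preassociative steps that adjust the coordinates separately. First, since $F_k(\bfy)=c=F_k(\bfy')$, B-preassociativity yields $F_{k+1}(\bfy z)=F_{k+1}(\bfy' z)$. Second, writing $\bfy'=y'_1 w$ with $w\in X^{k-1}$ (possibly empty if $k=1$), the equality $F_k(w z)=c=F_k(w z')$ together with B-preassociativity applied with $\bfx=y'_1$ gives $F_{k+1}(y'_1 w z)=F_{k+1}(y'_1 w z')$. Concatenating the two equalities shows $F_{k+1}(\bfy z) = F_{k+1}(\bfy' z')$, so $F_{k+1}$ is constant.

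For (c), the plan is a case split on $m=|\bfx\bfy\bfz|$. If $m>n$, then both $G(\bfx\bfy\bfz)$ and $G(\bfx\bfy'\bfz)$ equal the constant $c_m$ by definition of $G$, and the B-preassociative conclusion holds trivially (without even invoking the hypothesis $G(\bfy)=G(\bfy')$). If $m\leqslant n$, then also $|\bfy|=|\bfy'|\leqslant n$, so $G$ coincides with $F$ on every string appearing in Definition~\ref{de:BPA}, and the conclusion reduces at once to B-preassociativity of $F$.

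The only genuinely nontrivial step is the algebraic remark hidden in (a): the two copies of the symmetric group acting on $\{1,\ldots,k\}$ and on $\{2,\ldots,k+1\}$ jointly produce all adjacent transpositions $(i,i+1)$ for $i=1,\ldots,k$ and hence generate the full symmetric group on $\{1,\ldots,k+1\}$. This overlap argument requires $k\geqslant 2$, which is exactly the hypothesis of the statement, and fails for $k=1$ (where $F_1$ is vacuously symmetric but $F_2$ need not be).
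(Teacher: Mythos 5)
Your proof is correct; the paper omits the argument as straightforward, and what you give is exactly the intended straightforward argument (reduction of symmetry to adjacent transpositions via the overlapping prefix and suffix of length $k$, a two-step replacement for constancy, and a case split on $|\bfx\bfy\bfz|\lessgtr n$ for part (c)). Your closing remark correctly identifies why $k\geqslant 2$ is needed in (a) and why (b) and (c) need no such restriction.
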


%
%

\begin{proposition}
Let $F\colon X^*\to Y$ be a B-preassociative function and let $k\geqslant 2$ be an integer. If the function $\bfy\in X^k\mapsto F_{k+2}(x\bfy z)$ is symmetric for every $x,z\in X$, then so is the function $\bfy\in X^{k+1}\mapsto F_{k+3}(x\bfy z)$ for every $x,z\in X$.
\end{proposition}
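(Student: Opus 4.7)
The plan is to imitate the structure of the proof of Proposition~\ref{prop:2sym2} (the B-associative analog), replacing the use of B-associativity (which literally substitutes $F$-values back into the string) by B-preassociativity (which only lets us swap one substring for another when the two substrings have the same length and the same $F$-value).

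First I would fix $x,z \in X$, write $\bfy \in X^{k+1}$ as $y_1 \bfu y_{k+1}$ with $\bfu \in X^{k-1}$ (possible since $k \geqslant 2$), and reduce the goal $F_{k+3}(x \bfy z) = F_{k+3}(x \sigma(\bfy) z)$ to two families of permutations $\sigma \in S_{k+1}$: those fixing coordinate $k+1$ (which act only on $y_1 \bfu$), and those fixing coordinate $1$ (which act only on $\bfu y_{k+1}$). These two families jointly generate $S_{k+1}$ because together they contain every adjacent transposition $(i,i+1)$ of $\{1,\dots,k+1\}$; this is exactly where the hypothesis $k \geqslant 2$ (equivalently $\bfu$ nonempty) is used.

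For the first family, I would apply the symmetry hypothesis with bookend letters $x$ (left) and $y_{k+1}$ (right) to obtain
\[
F_{k+2}(x y_1 \bfu y_{k+1}) \;=\; F_{k+2}\bigl(x\, \sigma(y_1 \bfu)\, y_{k+1}\bigr).
\]
Since both arguments are strings of the common length $k+2$, B-preassociativity applied with empty prefix and suffix $z$ then yields $F_{k+3}(x \bfy z) = F_{k+3}(x \sigma(\bfy) z)$. The second family is handled dually: the hypothesis with bookend letters $y_1$ and $z$ gives $F_{k+2}(y_1 \bfu y_{k+1} z) = F_{k+2}(y_1\, \tau(\bfu y_{k+1})\, z)$, and B-preassociativity with prefix $x$ and empty suffix concludes.

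I do not expect a serious technical obstacle: the entire argument is just two symmetric applications of the same scheme, both cases reducing directly to B-preassociativity applied to a length-$(k+2)$ substring. The only points requiring attention are the correct choice of the two ``bookend'' letters in each invocation of the hypothesis, and the verification that Family~1's transpositions $(1,2),\dots,(k-1,k)$ together with Family~2's transpositions $(2,3),\dots,(k,k+1)$ cover all adjacent transpositions of $\{1,\dots,k+1\}$ (and hence generate $S_{k+1}$), which is automatic once $k \geqslant 2$.
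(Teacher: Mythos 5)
Your proof is correct and follows essentially the same route the paper intends: the paper omits this proof as a straightforward adaptation of its proof of Proposition~\ref{prop:2sym2}, which uses exactly your decomposition $\bfy=y_1\bfu y_{k+1}$ and the observation that symmetry on $y_1\bfu$ together with symmetry on $\bfu y_{k+1}$ forces symmetry on all of $\bfy$. Your replacement of the B-associative substitution step by an equal-values-plus-B-preassociativity step, with the bookends $(x,y_{k+1})$ and $(y_1,z)$, is precisely the required modification.
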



We now focus on those B-preassociative functions which are arity-wise quasi-range-idempotent, that is, such that $\ran(\delta_{F_n})=\ran(F_n)$ for every $n\in\N$. As we will now show, this special class of functions has very interesting and even surprising properties. First of all, just as for B-associative $\varepsilon$-standard operations, B-preassociative and arity-wise quasi-range-idempotent functions $F\colon X^*\to Y$ are completely determined by either of the functions $\delta_{F_k}^{r}$ or $\delta_{F_k}^{\ell}$ for every $k\in\N$.

\begin{proposition}\label{prop:d87sad6ads2}
Assume AC and let $F\colon X^*\to Y$ and $G\colon X^*\to Y$ be two B-preassociative and arity-wise quasi-range-idempotent functions such that $\delta_{F_k}^{r}=\delta_{G_k}^{r}$ or $\delta_{F_k}^{\ell}=\delta_{G_k}^{\ell}$ for every integer $k\geqslant 0$. Then $F=G$.
\end{proposition}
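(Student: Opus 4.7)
The plan is to adapt the inductive strategy from the proof of Proposition~\ref{prop:d87sad6ads} (the B-associative analogue), replacing the single use of B-associativity by a two-step maneuver that exploits arity-wise quasi-range-idempotence together with B-preassociativity.

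I proceed by induction on $k\geqslant 0$ to show $F_k = G_k$. The base cases $k=0$ and $k=1$ are immediate from the conventions $\delta_{F_0}^r=\delta_{F_0}^{\ell}=F_0$ and $\delta_{F_1}^r=\delta_{F_1}^{\ell}=F_1$, which force $F_0=G_0$ and $F_1=G_1$ directly from the hypothesis.

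For the inductive step, suppose $F_k=G_k$ with $k\geqslant 1$ and, without loss of generality, that $\delta_{F_{k+1}}^{r}=\delta_{G_{k+1}}^{r}$ (the $\ell$-case is dual, writing strings as $x\bfz$ instead of $\bfx z$). Fix $\bfy=\bfx z\in X^{k+1}$ with $|\bfx|=k$. The crucial move is to reduce to a right-diagonal string $x^k z$ for a suitable single letter $x$. Since $F$ is arity-wise quasi-range-idempotent, $F_k(\bfx)\in \ran(F_k)=\ran(\delta_{F_k})$, so there exists $x\in X$ with $F_k(x^k)=F_k(\bfx)$. Applying B-preassociativity of $F$ to replace the substring $\bfx$ by $x^k$ yields $F(\bfx z)=F(x^k z)=\delta_{F_{k+1}}^{r}(xz)$. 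Because $G_k=F_k$ by the induction hypothesis, the same identity $G_k(x^k)=G_k(\bfx)$ holds, so B-preassociativity of $G$ gives $G(\bfx z)=G(x^k z)=\delta_{G_{k+1}}^{r}(xz)$. The hypothesis $\delta_{F_{k+1}}^{r}=\delta_{G_{k+1}}^{r}$ then yields $F(\bfy)=G(\bfy)$.

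The only conceptual subtlety is the following: in the B-associative setting of Proposition~\ref{prop:d87sad6ads} one can use the identity $F(\bfx z)=F(F_k(\bfx)^k z)$ directly from the defining equation, whereas in the B-preassociative setting this identity is not available. It must be recovered in two stages, by first producing, via arity-wise quasi-range-idempotence, a letter $x$ whose diagonal value matches $F_k(\bfx)$, and only then invoking B-preassociativity to effect the substring substitution. Since that existence step is exactly what arity-wise quasi-range-idempotence furnishes, no further obstacle arises, and AC plays no explicit role beyond whatever is tacitly needed for the ambient framework.
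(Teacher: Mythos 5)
Your proof is correct and is essentially the paper's own argument: both use arity-wise quasi-range-idempotence to produce a letter $u$ with $\delta_{F_k}(u)=F_k(\bfx)$ and then B-preassociativity to substitute $u^k$ for $\bfx$, reducing everything to the hypothesis on $\delta_{F_{k+1}}^{r}$ (or $\delta_{F_{k+1}}^{\ell}$). Your added observation that AC is not genuinely needed here is also accurate, since only one witness $u$ is required at a time.
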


\begin{proof}
For any $k\geqslant 0$, if $F_k=G_k$ and for instance $\delta_{F_{k+1}}^{r}=\delta_{G_{k+1}}^{r}$, then $F_{k+1}=G_{k+1}$. Indeed, for every $\bfx z\in X^{k+1}$, by arity-wise quasi-range-idempotence there exists $u\in X$ such that $F_k(\bfx)=\delta_{F_k}(u)$. Since $F_k=G_k$, we also have $G_k(\bfx)=\delta_{G_k}(u)$. By B-preassociativity, we then have $F(\bfx z) = \delta_{F_{k+1}}^{r}(uz) = \delta_{G_{k+1}}^{r}(uz) = G(\bfx z)$. The result then follows from an immediate induction.
\end{proof}

We now give a characterization of the B-preassociative and arity-wise quasi-range-idempotent functions as compositions of B-associative $\varepsilon$-standard operations with one-to-one unary maps. We first consider a lemma, which provides equivalent conditions for an arity-wise quasi-range-idempotent function to be B-preassociative.

\begin{lemma}\label{lemma:UQRIrew67}
Assume AC and let $F\colon X^*\to Y$ be an arity-wise quasi-range-idempo{\-}tent function. The following assertions are equivalent.
\begin{enumerate}
\item[(i)] $F$ is B-preassociative.

\item[(ii)] For every sequence $(g_n\in Q(\delta_{F_n}))_{n\in\N}$, the $\varepsilon$-standard operation $H\colon X^*\to X\cup\{\varepsilon\}$ defined as $H_n=g_n\circ F_n$ for every $n\in\N$ is B-associative.

\item[(iii)] There is a sequence $(g_n\in Q(\delta_{F_n}))_{n\in\N}$ such that the $\varepsilon$-standard operation $H\colon X^*\to X\cup\{\varepsilon\}$ defined as $H_n=g_n\circ F_n$ for every $n\in\N$ is B-associative.
\end{enumerate}
\end{lemma}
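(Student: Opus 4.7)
The plan is to establish (i) $\Rightarrow$ (ii) $\Rightarrow$ (iii) $\Rightarrow$ (i), with (ii) $\Rightarrow$ (iii) being immediate since AC guarantees that $Q(\delta_{F_n})\neq\emptyset$ for every $n$. The key structural fact I would invoke at the start is that for any $g_n\in Q(\delta_{F_n})$ and $H_n=g_n\circ F_n$, the factorization $F_n=\delta_{F_n}\circ H_n$ holds: since $\ran(F_n)=\ran(\delta_{F_n})$, Fact~\ref{fact:fgh} gives $\delta_{F_n}\circ g_n\circ F_n=F_n$. In particular, for any $\bfy\in X^n$ we have the crucial identity $F_n(H_n(\bfy)^n)=\delta_{F_n}(H_n(\bfy))=F_n(\bfy)$.

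For (i) $\Rightarrow$ (ii), I fix any sequence $(g_n)$ and verify that $H$ (extended by $H_0(\varepsilon)=\varepsilon$) satisfies the identity in Definition~\ref{de:Bassoc}. The case $\bfy=\varepsilon$ is trivial because $H(\varepsilon)^0=\varepsilon$. Otherwise, set $k=|\bfy|\geqslant 1$, $u=H_k(\bfy)\in X$, and $m=|\bfx\bfy\bfz|$. The identity above yields $F_k(\bfy)=F_k(u^k)$, so B-preassociativity of $F$ gives $F_m(\bfx\bfy\bfz)=F_m(\bfx u^k\bfz)$; applying $g_m$ to both sides then yields $H(\bfx\bfy\bfz)=H(\bfx u^k\bfz)$, i.e.\ $H(\bfx\bfy\bfz)=H(\bfx H(\bfy)^{|\bfy|}\bfz)$.

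For (iii) $\Rightarrow$ (i), I take the given sequence $(g_n)$ with associated B-associative $H$. By Proposition~\ref{prop:BA-PBA1}, $H$ is in particular B-preassociative. Now given $\bfx\bfy\bfy'\bfz\in X^*$ with $|\bfy|=|\bfy'|=k$ and $F(\bfy)=F(\bfy')$, applying $g_k$ yields $H_k(\bfy)=H_k(\bfy')$, so B-preassociativity of $H$ gives $H_m(\bfx\bfy\bfz)=H_m(\bfx\bfy'\bfz)$ where $m=|\bfx\bfy\bfz|$. Finally, composing on the left with $\delta_{F_m}$ and using the factorization $F_m=\delta_{F_m}\circ H_m$ produces $F(\bfx\bfy\bfz)=F(\bfx\bfy'\bfz)$.

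The one subtle point, which is the main technical obstacle, is in the last step: because the quasi-inverses $g_n$ are not generally injective, one cannot directly invert the left composition to pass from an equality of $H$-values back to an equality of $F$-values. The fix is to avoid inverting $g_m$ altogether and instead route everything through the factorization $F_n=\delta_{F_n}\circ H_n$, which is available precisely because of arity-wise quasi-range-idempotence. This same observation is what makes the forward direction clean: it lets me translate the hypothesis $F_k(\bfy)=\delta_{F_k}(u)$ into a genuine B-preassociative substitution for $F$.
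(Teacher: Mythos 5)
Your proof is correct. It rests on the same structural fact as the paper's argument --- the factorization $F_n=\delta_{F_n}\circ H_n$ obtained from arity-wise quasi-range-idempotence via Fact~\ref{fact:fgh} --- but you unwind everything to the level of the definitions, whereas the paper proceeds modularly. For (i)~$\Rightarrow$~(ii) the paper shows that $H$ is arity-wise range-idempotent (Proposition~\ref{prop:QRIqi2431}) and B-preassociative (Proposition~\ref{prop:leftcomp56}, using that $g_n|_{\ran(F_n)}$ is one-to-one) and then invokes Proposition~\ref{prop:BA-PBA1}; you instead verify the identity $H(\bfx\bfy\bfz)=H(\bfx H(\bfy)^{|\bfy|}\bfz)$ directly from $F_k(H_k(\bfy)^k)=\delta_{F_k}(H_k(\bfy))=F_k(\bfy)$, which is essentially the same content with the two cited propositions inlined. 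For (iii)~$\Rightarrow$~(i) the paper writes $F_n=(g_n|_{\ran(F_n)})^{-1}\circ H_n$ and cites Proposition~\ref{prop:leftcomp56} again --- legitimate, because a quasi-inverse of $\delta_{F_n}$ is automatically one-to-one on $\ran(\delta_{F_n})=\ran(F_n)$, which is where the relevant values live --- while you compose with $\delta_{F_m}$ and use the factorization, avoiding that injectivity observation entirely. So your closing remark that one ``cannot directly invert the left composition'' slightly overstates the obstacle, but your detour through $\delta_{F_m}$ is sound and arguably the more economical route; the handling of the degenerate cases ($\bfy=\varepsilon$ in the forward direction, $k=0$ in the reverse) is either explicit or trivially repairable. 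No gaps.
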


\begin{proof}
(i) $\Rightarrow$ (ii)  By Proposition~\ref{prop:QRIqi2431}, $H$ is arity-wise range-idempotent. Since $g_n|_{\ran(\delta_{F_n})}=g_n|_{\ran(F_n)}$ is one-to-one for every $n\in\N$, by Proposition~\ref{prop:leftcomp56} the operation $H$ is B-preassociative. It follows that $H$ is B-associative by Proposition~\ref{prop:BA-PBA1}.

(ii) $\Rightarrow$ (iii) Trivial.

(iii) $\Rightarrow$ (i) By Proposition~\ref{prop:BA-PBA1} we have that $H$ is B-preassociative. For every $n\in\N$, since $g_n|_{\ran(F_n)}$ is a one-to-one function from $\ran(F_n)$ onto $\ran(g_n)$, we have $F_n = (g_n|_{\ran(F_n)})^{-1}\circ H_n$ and the function $(g_n|_{\ran(F_n)})^{-1}|_{\ran(H_n)}$ is one-to-one from $\ran(H_n)$ onto $\ran(F_n)$. By Proposition~\ref{prop:leftcomp56} it follows that $F$ is B-preassociative.
\end{proof}

\begin{remark}
Let $F\colon X^*\to Y$ be a B-preassociative function such that $F_n=f_n\circ H_n$ for every $n\in\N$, where $f_n\colon X\to Y$ is any function and $H\colon X^*\to X\cup\{\varepsilon\}$ is any arity-wise range-idempotent operation. Then $F_n=\delta_{F_n}\circ H_n$ for every $n\in\N$ by Proposition~\ref{prop:ACqriTFAE3451}(iv). However, $H$ need not be B-associative. For instance, if $F$ is a constant function, then $H$ could be any arity-wise range-idempotent function. However, Lemma~\ref{lemma:UQRIrew67} shows that, assuming AC, there is always a B-associative solution $H$ of the equation $F_n=\delta_{F_n}\circ H_n$; for instance, $H_0(\varepsilon)=\varepsilon$ and $H_n=g_n\circ F_n$ for $g_n\in Q(\delta_{F_n})$ and $n\in\N$.
\end{remark}

\begin{theorem}\label{thm:FactoriAWRI-BPA237111}
Assume AC and let $F\colon X^*\to Y$ be a function. The following assertions are equivalent.
\begin{enumerate}
\item[(i)] $F$ is B-preassociative and arity-wise quasi-range-idempotent.

\item[(ii)] There exists a B-associative $\varepsilon$-standard operation $H\colon X^*\to X\cup\{\varepsilon\}$ and a sequence $(f_n\colon\ran(H_n)\to Y)_{n\in\N}$ of one-to-one functions such that $F_n=f_n\circ H_n$ for every $n\in\N$.
\end{enumerate}
If condition (ii) holds, then for every $n\in\N$ we have $F_n=\delta_{F_n}\circ H_n$, $f_n=\delta_{F_n}|_{\ran(H_n)}$, $f_n^{-1}\in Q(\delta_{F_n})$, and we may choose $H_n=g_n\circ F_n$ for any $g_n\in Q(\delta_{F_n})$.
\end{theorem}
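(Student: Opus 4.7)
The plan is to piece together several earlier results into a clean equivalence, handling each arity separately.

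For the implication (i)~$\Rightarrow$~(ii), my starting point is Lemma~\ref{lemma:UQRIrew67}: assuming AC, for any sequence $(g_n\in Q(\delta_{F_n}))_{n\in\N}$ the $\varepsilon$-standard operation defined by $H_0(\varepsilon)=\varepsilon$ and $H_n=g_n\circ F_n$ for $n\in\N$ is B-associative. Proposition~\ref{prop:QRIqi2431} applied to each quasi-range-idempotent $F_n$ then simultaneously gives $F_n=\delta_{F_n}\circ H_n$ and the fact that $\delta_{F_n}|_{\ran(H_n)}$ is one-to-one. I therefore set $f_n=\delta_{F_n}|_{\ran(H_n)}$; since $H_n$ takes its values in $\ran(H_n)$, the required factorization $F_n=f_n\circ H_n$ follows immediately.

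For the converse (ii)~$\Rightarrow$~(i), I would use the structural properties of B-associative $\varepsilon$-standard operations. By Fact~\ref{fact:BassocAWRI}, $H$ is arity-wise range-idempotent, so $\delta_{H_n}\circ H_n=H_n$ and consequently $\ran(\delta_{H_n})=\ran(H_n)$. The identity $\delta_{F_n}=f_n\circ\delta_{H_n}$ is then immediate from $F_n=f_n\circ H_n$, and it yields
$$
\ran(\delta_{F_n})\ =\ f_n(\ran(\delta_{H_n}))\ =\ f_n(\ran(H_n))\ =\ \ran(F_n),
$$
which is arity-wise quasi-range-idempotence. For B-preassociativity, Proposition~\ref{prop:BA-PBA1} ensures that $H$ itself is B-preassociative, and Proposition~\ref{prop:leftcomp56} then transports this property through left composition by the unary maps $f_n$, which are by hypothesis one-to-one on $\ran(H_n)=\dom(f_n)$.

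Finally, to verify the supplementary identifications in the last sentence of the theorem: since $\delta_{H_n}$ restricts to the identity on $\ran(H_n)$, one reads off $\delta_{F_n}|_{\ran(H_n)}=f_n$, whence $\delta_{F_n}\circ H_n=F_n$. The assertion $f_n^{-1}\in Q(\delta_{F_n})$ is exactly the last clause of Proposition~\ref{prop:ACqriTFAE3451}(iv) applied to the range-idempotent operation $H_n$, and the freedom $H_n=g_n\circ F_n$ with $g_n\in Q(\delta_{F_n})$ has already been exploited in (i)~$\Rightarrow$~(ii). The only real bookkeeping issue is to check that the hypotheses of Lemma~\ref{lemma:UQRIrew67} and of Proposition~\ref{prop:leftcomp56} line up correctly on each arity; this is the step I expect to require the most care, since it amounts to verifying that the one-to-one restrictions we invoke really are restrictions to the ranges of the intermediate operations, but it is routine once the arity-wise picture is in place.
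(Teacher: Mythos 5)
Your proposal is correct and follows essentially the same route as the paper's proof: (i)~$\Rightarrow$~(ii) via Proposition~\ref{prop:QRIqi2431} and Lemma~\ref{lemma:UQRIrew67} with $H_n=g_n\circ F_n$, (ii)~$\Rightarrow$~(i) via Proposition~\ref{prop:leftcomp56} together with the range computation that the paper packages as Proposition~\ref{prop:ACqriTFAE3451}(v)$\Rightarrow$(i), and the final identifications via Proposition~\ref{prop:ACqriTFAE3451}(iv). The only difference is that you unpack the quasi-range-idempotence of $F$ directly from the range-idempotence of $H$ rather than citing the earlier equivalence, which is the same argument spelled out.
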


\begin{proof}
(i) $\Rightarrow$ (ii) Let $g_n\in Q(\delta_{F_n})$ for every $n\in\N$ and consider the $\varepsilon$-standard operation $H\colon X^*\to X\cup\{\varepsilon\}$ defined as $H_n=g_n\circ F_n$ for every $n\in\N$. By Proposition~\ref{prop:QRIqi2431}, we have $F_n=f_n\circ H_n$, where $f_n=\delta_{F_n}|_{\ran(H_n)}$ is one-to-one. By Lemma~\ref{lemma:UQRIrew67}, $H$ is B-associative.

(ii) $\Rightarrow$ (i) $F$ is arity-wise quasi-range-idempotent by Proposition~\ref{prop:ACqriTFAE3451}. It is also B-preassociative by Proposition~\ref{prop:leftcomp56}.

The last part follows from Proposition~\ref{prop:ACqriTFAE3451}(iv) and Lemma~\ref{lemma:UQRIrew67}.
\end{proof}

\begin{remark}\label{rem:fsd68s}
A function $F\colon X^*\to Y$ such that $F_n=\delta_{F_n}\circ H_n$ for every $n\in\N$, where $H$ is B-associative, need not be B-preassociative. The example given in Remark~\ref{rem:s8d76}(b) illustrates this observation. To give a second example, take $X=\R$, $F_n(\bfx)=|\frac{1}{n}\sum_{i=1}^nx_i|$ and $H_n(\bfx)=\frac{1}{n}\sum_{i=1}^nx_i$ for every $n\in\N$. Then $F(1)=F(-1)$ but $F(11)=1\neq 0=F(1(-1))$. Thus $F$ is not B-preassociative.
\end{remark}

The following two results concern B-associative functions whose $n$-ary part is idempotizable (i.e., quasi-range-idempotent with a one-to-one diagonal section) for every $n\in\N$.

\begin{proposition}\label{prop:dsa98as}
Assume AC and let $F\colon X^*\to Y$ be a function. If condition (ii) of Theorem~\ref{thm:FactoriAWRI-BPA237111} holds, then the following assertions are equivalent.
\begin{enumerate}
\item[(i)] $\delta_{F_n}$ is one-to-one for every $n\in\N$,

\item[(ii)] $\delta_{H_n}$ is one-to-one for every $n\in\N$,

\item[(iii)] $\delta_{H_n}=\id$ for every $n\in\N$.
\end{enumerate}
\end{proposition}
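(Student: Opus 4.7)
The plan is to reduce everything to the pointwise identity $\delta_{F_n}=f_n\circ\delta_{H_n}$, which falls straight out of the factorization provided by Theorem~\ref{thm:FactoriAWRI-BPA237111}, and then to invoke Proposition~\ref{prop:sd5f7xx} for the step that collapses ``one-to-one'' to ``identity.'' Concretely, I will first unpack condition (ii) of Theorem~\ref{thm:FactoriAWRI-BPA237111} together with its ``last part'' to record that, for every $n\in\N$, $F_n=f_n\circ H_n$ with $f_n=\delta_{F_n}|_{\ran(H_n)}$ one-to-one, and $H$ is B-associative and $\varepsilon$-standard.

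Next, for any $n\in\N$ and $x\in X$, I would compute
$$
\delta_{F_n}(x) ~=~ F_n(x^n) ~=~ f_n(H_n(x^n)) ~=~ f_n(\delta_{H_n}(x)),
$$
so that $\delta_{F_n}=f_n\circ\delta_{H_n}$. Since $\ran(\delta_{H_n})\subseteq\ran(H_n)$ and $f_n$ is one-to-one on $\ran(H_n)$, this composition is one-to-one if and only if $\delta_{H_n}$ is; this yields (i)~$\Leftrightarrow$~(ii).

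For (ii)~$\Rightarrow$~(iii), I would appeal to Fact~\ref{fact:BassocAWRI}: a B-associative operation is arity-wise range-idempotent, and since $H$ is $\varepsilon$-standard this means $\delta_{H_n}\circ H_n=H_n$, i.e., each $H_n\colon X^n\to X$ is range-idempotent in the sense of Section~2. Proposition~\ref{prop:sd5f7xx} then forces $\delta_{H_n}=\id$ as soon as $\delta_{H_n}$ is one-to-one. The implication (iii)~$\Rightarrow$~(ii) is trivial because $\id$ is injective.

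I do not anticipate a real obstacle here; the only point requiring a little care is making sure that the ``$f_n$ one-to-one on $\ran(H_n)$'' hypothesis is applied on the correct set (namely, that $\ran(\delta_{H_n})\subseteq\ran(H_n)$, which is immediate), so that the $\delta_{F_n}=f_n\circ\delta_{H_n}$ identity can be composed in both directions to transfer injectivity between $\delta_{F_n}$ and $\delta_{H_n}$.
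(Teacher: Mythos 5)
Your proof is correct and follows essentially the same route as the paper: both rest on the diagonal identity $\delta_{F_n}=f_n\circ\delta_{H_n}$ to transfer injectivity between $\delta_{F_n}$ and $\delta_{H_n}$, and on a cancellation argument to upgrade injectivity of $\delta_{H_n}$ to $\delta_{H_n}=\id$. The only cosmetic difference is that the paper obtains the latter step by cancelling $\delta_{F_n}$ in $\delta_{F_n}=\delta_{F_n}\circ\delta_{H_n}$ (its implication (i)$\Rightarrow$(iii)), whereas you route it through the range-idempotence of $H_n$ (Fact~\ref{fact:BassocAWRI} together with Proposition~\ref{prop:sd5f7xx}), which is the same cancellation applied to $\delta_{H_n}$ itself and is equally valid.
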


\begin{proof}
$(i)\Rightarrow (iii)$ $\delta_{H_n}=\delta_{F_n}^{-1}\circ \delta_{F_n}=\id$.

$(iii)\Rightarrow (ii)$ Trivial.

$(ii)\Rightarrow (i)$ $\delta_{F_n}=f_n\circ\delta_{H_n}$ is one-to-one as a composition of one-to-one functions.
\end{proof}

\begin{corollary}\label{cor:sfa5sfds}
Let $F\colon X^*\to Y$ be a function such that $\delta_{F_n}$ is one-to-one for every $n\in\N$. The following assertions are equivalent.
\begin{enumerate}
\item[(i)] $F$ is B-preassociative and arity-wise quasi-range-idempotent.

\item[(ii)] There is a B-associative and idempotent $\varepsilon$-standard operation $H\colon X^*\to X\cup\{\varepsilon\}$ such that $F_n=\delta_{F_n}\circ H_n$ for every $n\in\N$.
\end{enumerate}
\end{corollary}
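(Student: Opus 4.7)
The plan is to deduce the corollary directly from Theorem~\ref{thm:FactoriAWRI-BPA237111} together with Proposition~\ref{prop:dsa98as}; the statement is a refinement of Theorem~\ref{thm:FactoriAWRI-BPA237111} to the setting where each diagonal section is one-to-one, so no new combinatorial work should be required, only a careful check that the hypotheses of the cited results are correctly triggered in each direction.

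For the implication (i) $\Rightarrow$ (ii), I would start from the hypothesis that $F$ is B-preassociative and arity-wise quasi-range-idempotent and apply Theorem~\ref{thm:FactoriAWRI-BPA237111}. This yields a B-associative $\varepsilon$-standard operation $H\colon X^*\to X\cup\{\varepsilon\}$ and a sequence $(f_n\colon\ran(H_n)\to Y)_{n\in\N}$ of one-to-one functions such that $F_n=f_n\circ H_n=\delta_{F_n}\circ H_n$ for every $n\in\N$. Since $\delta_{F_n}$ is assumed to be one-to-one for every $n\in\N$, condition (i) of Proposition~\ref{prop:dsa98as} holds; its equivalent condition (iii) then gives $\delta_{H_n}=\id$ for every $n\in\N$, i.e., $H$ is idempotent. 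Together with the factorization $F_n=\delta_{F_n}\circ H_n$ already at hand, this is exactly assertion (ii).

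For the converse (ii) $\Rightarrow$ (i), I would take the supplied B-associative and idempotent $H$ with $F_n=\delta_{F_n}\circ H_n$, and set $f_n=\delta_{F_n}|_{\ran(H_n)}$. Because $\delta_{F_n}$ is one-to-one on all of $X$ by hypothesis, its restriction $f_n$ is one-to-one from $\ran(H_n)$ to $Y$, and clearly $F_n=f_n\circ H_n$ for every $n\in\N$. Hence condition (ii) of Theorem~\ref{thm:FactoriAWRI-BPA237111} is satisfied, and its implication (ii) $\Rightarrow$ (i) delivers that $F$ is B-preassociative and arity-wise quasi-range-idempotent, which is assertion (i).

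The only subtlety — and it is not really an obstacle, just a point where I would be careful — is to make sure that Proposition~\ref{prop:dsa98as} is being invoked in the correct regime: its statement presumes that condition (ii) of Theorem~\ref{thm:FactoriAWRI-BPA237111} holds, so in the first implication I must first extract $H$ from Theorem~\ref{thm:FactoriAWRI-BPA237111} before appealing to Proposition~\ref{prop:dsa98as}. Once that logical order is respected, the proof reduces to two short invocations, and the Axiom of Choice is inherited silently from Theorem~\ref{thm:FactoriAWRI-BPA237111}.
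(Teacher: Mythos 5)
Your proof is correct and follows essentially the same route as the paper: both directions are obtained by combining Theorem~\ref{thm:FactoriAWRI-BPA237111} with Proposition~\ref{prop:dsa98as}. The one point to fix is your closing remark that AC is ``inherited silently'': since $\delta_{F_n}$ is one-to-one, the quasi-inverse $g_n\in Q(\delta_{F_n})$ is simply the genuine inverse $\delta_{F_n}^{-1}$, so no choice is actually needed --- which matters because the corollary, unlike the theorem, does not assume AC.
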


\begin{proof}
Follows from Theorem~\ref{thm:FactoriAWRI-BPA237111} and Proposition~\ref{prop:dsa98as}. Here AC is not required since the quasi-inverse of $\delta_{F_n}$ is simply an inverse.
\end{proof}

Applying Corollary~\ref{cor:sfa5sfds} to the class of quasi-arithmetic mean functions (Theorem~\ref{thm:KolNag30}), we obtain the following generalization of Kolmogoroff-Nagumo's characterization.

\begin{theorem}\label{thm:KolmExt}
Let $\mathbb{I}$ be a nontrivial real interval, possibly unbounded. A function $F\colon \mathbb{I}^*\to\R$ is B-preassociative and, for every $n\in\N$, the function $F_n$ is symmetric, continuous, and strictly increasing in each argument if and only if there are continuous and strictly increasing functions $f\colon\mathbb{I}\to\R$ and $f_n\colon\R\to\R$ $(n\in\N)$ such that
$$
F_n(\bfx) ~=~ f_n\bigg(\frac{1}{n}\sum_{i=1}^n f(x_i)\bigg),\qquad n\in\N.
$$
\end{theorem}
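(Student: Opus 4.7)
The plan is to reduce this characterization to the classical Kolmogoroff-Nagumo theorem (Theorem~\ref{thm:KolNag30}) via the factorization supplied by Corollary~\ref{cor:sfa5sfds}. Sufficiency is straightforward: writing $M_n(\bfx) = f^{-1}(\frac{1}{n}\sum_{i=1}^n f(x_i))$, one has $F_n = f_n\circ M_n$; the quasi-arithmetic mean $M$ is B-associative by Theorem~\ref{thm:KolNag30}, hence B-preassociative by Proposition~\ref{prop:BA-PBA1}, and since each $f_n$ is strictly increasing and in particular one-to-one on $\ran(M_n)$, Proposition~\ref{prop:leftcomp56} transports B-preassociativity to $F$. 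Symmetry, continuity, and strict monotonicity of each $F_n$ are immediate from the explicit formula.

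For the necessity I would first establish that $F$ is arity-wise quasi-range-idempotent. The diagonal $\delta_{F_n}$ is continuous and strictly increasing on $\mathbb{I}$, hence a homeomorphism onto its range. For any $\bfx\in\mathbb{I}^n$, coordinatewise strict increasingness of $F_n$ gives $\delta_{F_n}(\min_i x_i)\leq F_n(\bfx)\leq \delta_{F_n}(\max_i x_i)$, and the intermediate value theorem applied to the continuous $\delta_{F_n}$ supplies a $u$ with $\delta_{F_n}(u)=F_n(\bfx)$. This proves $\ran(F_n)=\ran(\delta_{F_n})$ for every $n$.

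Since moreover $\delta_{F_n}$ is one-to-one for every $n$, Corollary~\ref{cor:sfa5sfds} produces a B-associative and idempotent $\varepsilon$-standard operation $H\colon\mathbb{I}^*\to\mathbb{I}\cup\{\varepsilon\}$ with $F_n=\delta_{F_n}\circ H_n$, namely $H_n=\delta_{F_n}^{-1}\circ F_n$. Because $\delta_{F_n}^{-1}$ is a continuous strictly increasing bijection onto $\mathbb{I}$, each $H_n$ inherits from $F_n$ symmetry, continuity, and strict increasingness in each argument. Thus $H$ satisfies the hypotheses of Theorem~\ref{thm:KolNag30}, which yields a continuous strictly monotonic $f\colon\mathbb{I}\to\R$ with $H_n(\bfx)=f^{-1}(\frac{1}{n}\sum_{i=1}^n f(x_i))$; replacing $f$ by $-f$ if needed leaves the right-hand side invariant, so we may take $f$ strictly increasing. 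Setting $f_n := \delta_{F_n}\circ f^{-1}$, which is continuous and strictly increasing, delivers the required representation.

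I do not anticipate any serious obstacle. The only point requiring care is verifying that the factor $H$ produced by Corollary~\ref{cor:sfa5sfds} retains the regularity hypotheses of Kolmogoroff-Nagumo; this is secured by the fact that $\delta_{F_n}^{-1}$ is a continuous strictly increasing bijection, so the composition $\delta_{F_n}^{-1}\circ F_n$ preserves symmetry, continuity, and coordinatewise strict increasingness. Note that AC is not invoked here, because each $\delta_{F_n}$ has a genuine inverse rather than merely a quasi-inverse, which is also why Corollary~\ref{cor:sfa5sfds} (rather than Theorem~\ref{thm:FactoriAWRI-BPA237111}) is the right tool.
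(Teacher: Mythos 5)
Your proposal is correct and follows essentially the same route as the paper: establish arity-wise quasi-range-idempotence via the $\delta_{F_n}(\min)\leqslant F_n(\bfx)\leqslant\delta_{F_n}(\max)$ bounds and continuity of $\delta_{F_n}$, factor $F_n=\delta_{F_n}\circ H_n$ through Corollary~\ref{cor:sfa5sfds}, apply Theorem~\ref{thm:KolNag30} to $H$, and set $f_n=\delta_{F_n}\circ f^{-1}$. Your added remarks (that $H_n$ inherits the regularity hypotheses, and that $f$ may be replaced by $-f$ to make it increasing) are sound refinements of the same argument rather than a different approach.
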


\begin{proof}
(Necessity) Let $n\in\N$ and $y\in\ran(F_n)$. Since $F_n$ is increasing, for any $\bfx=x_1\cdots{\,}x_n\in X^n$ such that $F(\bfx)=y$ we have
$$
\delta_{F_n}(\min\{x_1,\ldots,x_n\})~\leqslant ~ y ~\leqslant ~\delta_{F_n}(\max\{x_1,\ldots,x_n\}).
$$
Since $\delta_{F_n}$ is continuous, it follows that $y\in\ran(\delta_{F_n})$. Therefore $\ran(F_n)\subseteq\ran(\delta_{F_n})$ and hence $F$ is arity-wise quasi-range-idempotent.

By Corollary~\ref{cor:sfa5sfds}, the $\varepsilon$-standard operation $H\colon\mathbb{I}^*\to\mathbb{I}\cup\{\varepsilon\}$ defined as $H_n=\delta_{F_n}^{-1}\circ F_n$ for every $n\in\N$ is B-associative and every $H_n$ is idempotent, strictly increasing in each variable, continuous, and symmetric. By Theorem~\ref{thm:KolNag30}, there is a continuous and strictly increasing function $f\colon\mathbb{I}\to\R$ such that
$$
H_n(\bfx) ~=~ f^{-1}\bigg(\frac{1}{n}\sum_{i=1}^n f(x_i)\bigg),\qquad n\in\N.
$$
To conclude, it suffices to define $f_n\colon\R\to\R$ as $f_n=\delta_{F_n}\circ f^{-1}$.

(Sufficiency) For every $n\in\N$ we clearly have $\delta_{F_n}=f_n\circ f$, that is, $f_n=\delta_{F_n}\circ f^{-1}$. It follows that $F$ is B-preassociative by Corollary~\ref{cor:sfa5sfds}. The other properties are immediate.
\end{proof}

The axiomatization given in Theorem~\ref{thm:KolmExt} enables us to introduce the following definition.

\begin{definition}
Let $\mathbb{I}$ be a nontrivial real interval, possibly unbounded. We say that a function $F\colon \mathbb{I}^*\to\R$ is a \emph{quasi-arithmetic pre-mean function} if there are continuous and strictly increasing functions $f\colon\mathbb{I}\to\R$ and $f_n\colon\R\to\R$ $(n\in\N)$ such that
$$
F_n(\bfx) ~=~ f_n\bigg(\frac{1}{n}\sum_{i=1}^n f(x_i)\bigg),\qquad n\in\N.
$$
\end{definition}

\begin{remark}
As expected, the class of quasi-arithmetic pre-mean functions includes all the quasi-arithmetic mean functions (just take $f_n=f^{-1}$). Actually the quasi-arithmetic mean functions are exactly those quasi-arithmetic pre-mean functions which are idempotent. However, there are also many non-idempotent quasi-arithmetic pre-mean functions. Taking for instance $f_n(x)=nx$ and $f(x)=x$ over the reals $\mathbb{I}=\R$, we obtain the sum function. Taking $f_n(x)=\exp(nx)$ and $f(x)=\ln(x)$ over $\mathbb{I}=\left]0,\infty\right[$, we obtain the product function.
\end{remark}

The following proposition shows that the generators $f_n$ and $f$ defined in Theorem~\ref{thm:KolmExt} are defined up to an affine transformation.

\begin{proposition}\label{prop:genfg}
Let $\mathbb{I}$ be a nontrivial real interval, possibly unbounded. Let $f,g\colon\mathbb{I}\to\R$ and $f_n,g_n\colon\R\to\R$ $(n\in\N)$ be continuous and strictly monotonic functions. Then the functions $f_n(\frac{1}{n}\sum_{i=1}^n f(x_i))$ and $g_n(\frac{1}{n}\sum_{i=1}^n g(x_i))$ coincide on $\mathbb{I}^n$ if and only if there exist $r,s\in\R$, $r\neq 0$, such that $g_n^{-1}\circ f_n=g\circ f^{-1}=r{\,}\id+s$ for every $n\in\N$.
\end{proposition}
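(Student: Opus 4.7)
The plan is to split into the two directions: sufficiency by direct substitution, necessity by reducing the identity to Jensen's functional equation.

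For sufficiency, I would assume $g\circ f^{-1}(t) = rt+s$ (equivalently $g = rf+s$ on $\mathbb{I}$) and $g_n^{-1}\circ f_n = r\,\id+s$, so that $f_n(u) = g_n(ru+s)$. A one-line computation then gives, for any $\bfx\in\mathbb{I}^n$,
\[
f_n\!\left(\frac{1}{n}\sum_{i=1}^n f(x_i)\right) = g_n\!\left(r\cdot\frac{1}{n}\sum_{i=1}^n f(x_i)+s\right) = g_n\!\left(\frac{1}{n}\sum_{i=1}^n(rf(x_i)+s)\right) = g_n\!\left(\frac{1}{n}\sum_{i=1}^n g(x_i)\right).
\]

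For necessity, I would set $\psi := g\circ f^{-1}$, a continuous strictly monotonic map on the real interval $f(\mathbb{I})$, and $\varphi_n := g_n^{-1}\circ f_n$. Substituting $t_i = f(x_i)$ turns the assumed identity into
\[
\varphi_n\!\left(\frac{1}{n}\sum_{i=1}^n t_i\right) \;=\; \frac{1}{n}\sum_{i=1}^n \psi(t_i), \qquad t_1,\ldots,t_n \in f(\mathbb{I}).
\]
Setting $t_1 = \cdots = t_n = t$ yields $\varphi_n(t) = \psi(t)$ on $f(\mathbb{I})$ for every $n \in \N$; in particular $\varphi_2 = \psi$. Substituting this back into the case $n = 2$ produces Jensen's equation
\[
\psi\!\left(\frac{t_1+t_2}{2}\right) \;=\; \frac{\psi(t_1)+\psi(t_2)}{2}, \qquad t_1,t_2 \in f(\mathbb{I}).
\]
Since $\psi$ is continuous, the classical characterization of continuous solutions of Jensen's equation on an interval forces $\psi(t) = rt + s$ for some $r,s \in \R$, with $r \neq 0$ because $\psi$ is strictly monotonic. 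Combined with $\varphi_n = \psi$ on $f(\mathbb{I})$ for every $n$, this delivers $g_n^{-1}\circ f_n = g\circ f^{-1} = r\,\id + s$, as required.

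The crux of the argument is the two-step specialization that extracts Jensen's equation (first constant tuples, to identify each $\varphi_n$ with the common function $\psi$, then the case $n = 2$); beyond this purely algebraic step, the only external input is the classical fact that continuous solutions of Jensen's functional equation on a real interval are affine. No further delicate point is expected, since strict monotonicity of $\psi$ rules out the trivial constant solution and gives $r \neq 0$ automatically.
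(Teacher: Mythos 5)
Your proof is correct and follows essentially the same route as the paper: substitute $t_i=f(x_i)$, specialize to equal arguments to identify $g_n^{-1}\circ f_n$ with $g\circ f^{-1}$, and then invoke the classical fact that continuous solutions of Jensen's equation on an interval are affine (the paper applies Jensen's equality for general $n$ where you use $n=2$, an immaterial difference). Your explicit verification of sufficiency and the observation that strict monotonicity forces $r\neq 0$ match what the paper leaves implicit.
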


\begin{proof}
Let us prove the necessity. Setting $z_i=f(x_i)$, we see that the mentioned functions coincide on $\mathbb{I}^n$ if and only if
$$
(g_n^{-1}\circ f_n)\bigg(\frac{1}{n}\sum_{i=1}^n z_i\bigg) ~=~ \frac{1}{n}\sum_{i=1}^n (g\circ f^{-1})(z_i),\qquad n\in\N.
$$
Identifying the variables in this identity yields $g_n^{-1}\circ f_n=g\circ f^{-1}$ for every $n\in\N$. It follows that the continuous function $h=g\circ f^{-1}$ satisfies the Jensen equality $h(\frac{1}{n}\sum_{i=1}^n z_i)=\frac{1}{n}\sum_{i=1}^n h(z_i)$. Therefore there exist $r,s\in\R$, $r\neq 0$, such that $h(x)=rx+s$ (see \cite[p.~48]{Acz66}). The sufficiency is obvious.
\end{proof}

%

\section{Concluding remarks and open problems}

We have investigated the B-associativity for $\Ast$-ary operations as well as a relaxation of this property, namely B-preassociativity. In particular, we have presented a characterization of those B-preassociative functions which are arity-wise quasi-range-idempotent.

We end this paper with the following questions:
\begin{enumerate}
\item[(a)] Prove or disprove: If an operation $F\colon X^*\to X\cup\{\varepsilon\}$ is B-associative, then there exists a B-associative and idempotent operation $G\colon X^*\to X\cup\{\varepsilon\}$ such that $F_n=\delta_{F_n}\circ G_n$ for every $n\in\N$.

\item[(b)] Prove or disprove: Let $F\colon X^*\to X\cup\{\varepsilon\}$ be a B-associative operation. If $F_{k+1}$ is idempotent for some $k\in\N$, then so is $F_k$.

\item[(c)] Find a generalization of Theorem~\ref{thm:FactoriAWRI-BPA237111} by removing the arity-wise quasi-range-idempo{\-}tence property.

\item[(d)] Find necessary and sufficient conditions on $\delta_{F_n}$ ($n\in\N$) for a function $F\colon X^*\to Y$ satisfying $F_n=\delta_{F_n}\circ H_n$, where $H$ is B-associative, to be B-preassociative (cf.\ Remark~\ref{rem:fsd68s}).

\item[(e)] Find a characterization of those quasi-arithmetic pre-mean functions which are preassociative.
\end{enumerate}

\section*{Acknowledgments}

This research is supported by the internal research project F1R-MTH-PUL-12RDO2 of the University of Luxembourg.


\end{document}